\newtheorem{theorem}{Theorem}
\newtheorem{example}{Example}
\newtheorem{remark}{Remark}
\newtheorem{proposition}{Proposition}
\tikzset{knot_diagram/.style={draw=black, thick, line cap=round}}
\tikzset{tensor_diagram/.style={draw=black, line cap=round}}
\tikzset{
	knot diagram/every strand/.append style={
		thick,
		black
  }
}
\tikzset{square/.style={regular polygon,regular polygon sides=4}}
\title{Electric group for knots and links}
\author{Korablev Ph. G.}
\address{Chelyabinsk State University, Chelyabinsk, Russia; N.N. Krasovsky Institute of Mathematics and Meckhanics, Ekaterinburg, Russia}
\email{korablev@csu.ru}
\date{}
\begin{document}

\begin{abstract}
    In 2014 Andrey Perfiliev introduced the so-called electric invariant for non-oriented knots. This invariant was motivated by using Kirchhoff's laws for the dual graph of the knot diagram. Later, in 2020, Anastasiya Galkina generalised this invariant and defined the electric group for non-oriented knots. Both works were never written and published. In the present paper we describe a simple and general approach to the electric group for oriented knots and links. Each homomorphism from the electric group to an arbitrary finite group can be described by a proper colouring of the diagram. This colouring assigns an element of the group to each crossing of the diagram, and the proper conditions correspond to the areas of the diagram. In the second part of the paper we introduce tensor network invariants for coloured links. The idea of these invariants is very close to quantum invariants for classical links.
\end{abstract}

\maketitle

\section{Introduction}

The main purpose of this paper is to introduce the notion of electric group for oriented knots and links. An electric group is a group that can be assigned to any link, and this group is different from the classical knot group.

Initially, the idea of constructing so-called electric invariant for knots was introduced by Andrey Perfiliev in almost 2014. This work was never published. The electric invariant was based on the application of Kirchhoff's laws to the dual graph of the knot diagram. Later, in 2020, my student Anastasiya Galkina in her bachelor thesis extended this electric invariant to a simple version of the electric group. But this work was never published either. The current approach to the electric group is simpler and more general than the previous one.

Any homomorphism from the electric group of a link to a finite group can be interpreted as a proper colouring of a link diagram. This colouring assigns a group element to each crossing such that the product of the corresponding elements at the boundary of each region of the diagram is trivial. This view is similar to the colouring of diagrams by elements of a finite quandle. 

In the second part of the paper we construct an invariant for diagrams coloured by elements of the finite group, i.e. for the pair $(D, \xi)$, where $D$ is a diagram and $\xi$ is a homomorphism from the electric group to a finite group. The idea of this invariant is close to the idea of quantum invariants for links (see for example \cite{TQI}). It is called a tensor network invariant because it can be computed by replacing the diagram by a set of special tensors and making a contraction of the tensor product of these tensors. The tensors that replace crossings depend on the colour of the crossing. In order to define the invariant in a correct way, a so-called consistent tensor system is required, which consists of several families of tensors, indexed by elements of the group and satisfying a set of axioms. These axioms guarantee that the result does not change under Reidemeister moves.

The structure of the paper is as follows. In section 2 we define the electric group and prove that this group is a correctly defined invariant of oriented links. In section 3 we describe the set of homomorphisms from the electric group to a finite group as colourings of the link diagram. In section 4 we define the class of tensor network invariants for coloured diagrams. In particular, this leads to the invariant of oriented links. In section 5 some questions for further development of the theory are formulated.

\section{Electric group}

Let $D$ be a diagram of the oriented link $K$ on an oriented 2-sphere $S^2$. In the following we will assume that the sphere $S^2$ is oriented counter-clockwise. But it's also possible to choose the opposite orientation.

Let $C(D) = \{c_1, \ldots, c_n\}$ be the set of crossings of the diagram $D$, and $A(D) = \{a_1, \ldots, a_{n + 2}\}$ be the set of complement areas of the diagram $D$ on the 2-sphere $S^2$. Construct a group $\mathcal{E}(D)$ using generators and relators as follows. There are exactly $n + 2$ generators: two generators $a, b$ and $n$ generators, which are in one-to-one correspondence with crossings of the diagram $D$. 

Each area $a_i\in A(D)$ corresponds to the relation $R_{a_{i}}$ of the group $\mathcal{E}(D)$. To write this relation, choose any starting point and walk along the boundary of the are $a_i$ in the direction corresponding to the orientation of the 2-sphere $S^2$. Each crossing $c_j\in C(D)$ in the path corresponds to the word $w_{c_j}$, which is defined as follows. If $x_j$ is a generator corresponding to the positive crossing $c_j$, then $w_{c_j}$ is either $x_j ab$ or $b^{-1}a^{-1} x_{j}^{-1}$ or $a^{-1}x_j a$ or $b^{-1} x_{j}^{-1} b$. It depends on which corner in the neighbourhood of the crossing $c_j$ belongs to the area $a_i$ (figure \ref{Figure:CornersInNeighbourhood} on the left). If the crossing $c_j$ is negative and, as before, $x_j$ is a corresponding generator, then $w_{c_j}$ is either $x_j ab$ or $b^{-1}a^{-1}x_{j}^{-1}$ or $a^{-1}x_{j}^{-1} a$ or $b^{-1} x_j b$. It depends on which corner in the neighbourhood of the crossing $c_j$ is part of the area $a_i$ (figure \ref{Figure:CornersInNeighbourhood} on the right). Then the relation $R_{a_{i}}$ is a product of all words $w_{c_j}$ that we get by going around the boundary of the area $a_i$.

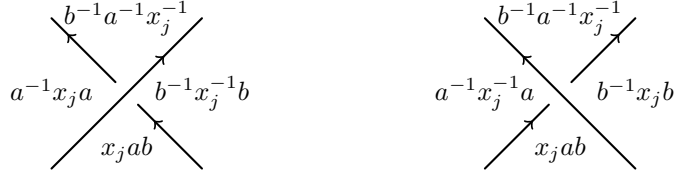
\begin{figure}[ht]
    \begin{tikzpicture}
        \begin{scope}[decoration={
            markings,
            mark=at position 0.75 with {\arrow{>}}}
            ]
            \draw[knot_diagram, postaction={decorate}] (-1, -1) -- (1, 1);
            \draw[knot_diagram, postaction={decorate}] (1, -1) -- (0.15, -0.15);
            \draw[knot_diagram, postaction={decorate}] (-0.15, 0.15) -- (-1, 1);
        \end{scope}

        \draw (0, -0.75) node {$x_j ab$};
        \draw (1.0, 0) node {$b^{-1} x_j^{-1} b$};
        \draw (0, 1.0) node {$b^{-1} a^{-1} x_j^{-1}$};
        \draw (-1.0, 0) node {$a^{-1} x_j a$};
    \end{tikzpicture}
    \hspace{2cm}
    \begin{tikzpicture}
        \begin{scope}[decoration={
            markings,
            mark=at position 0.75 with {\arrow{>}}}
            ]
            \draw[knot_diagram, postaction={decorate}] (-1, -1) -- (-0.15, -0.15);
            \draw[knot_diagram, postaction={decorate}] (0.15, 0.15) -- (1, 1);
            \draw[knot_diagram, postaction={decorate}] (1, -1) -- (-1, 1);
        \end{scope}

        \draw (0, -0.75) node {$x_j ab$};
        \draw (1.0, 0) node {$b^{-1} x_j b$};
        \draw (0, 1.0) node {$b^{-1} a^{-1} x_j^{-1}$};
        \draw (-1.0, 0) node {$a^{-1} x_j^{-1} a$};
    \end{tikzpicture}
    \caption{\label{Figure:CornersInNeighbourhood}Word $w_{c_j}$ in the neighbourhood of the crossing $c_j$ (positive on the left, negative on the right)}
\end{figure}

\begin{example}
    Let $D$ be an oriented minimal diagram of the <<negative trefoil>> knot. For this diagram $C(D) = \{c_1, c_2, c_3\}$ and $A(D) = \{a_1, a_2, a_3, a_4, a_5\}$ (figure \ref{Figure:TrefoilExample}). 

    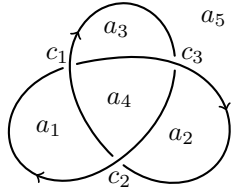
\begin{figure}[ht]
        \begin{tikzpicture}[scale=0.75]
            \draw[knot_diagram] (1.47, 25.888).. controls (1.306, 25.971) and (1.164, 26.103) .. (1.081, 26.267).. controls (0.98, 26.465) and (0.969, 26.702) .. (1.031, 26.915).. controls (1.093, 27.128) and (1.226, 27.316) .. (1.394, 27.462).. controls (1.556, 27.604) and (1.746, 27.704) .. (1.949, 27.777);
            \draw[->, knot_diagram] (3.934, 27.734).. controls (3.925, 27.612) and (3.901, 27.49) .. (3.865, 27.374).. controls (3.722, 26.902) and (3.399, 26.498) .. (3.009, 26.197).. controls (2.718, 25.972) and (2.373, 25.795) .. (2.006, 25.78).. controls (1.822, 25.772) and (1.635, 25.806) .. (1.47, 25.888);
            \draw[->, knot_diagram] (2.83, 26.22).. controls (2.728, 26.318) and (2.634, 26.426) .. (2.55, 26.538).. controls (2.351, 26.802) and (2.193, 27.101) .. (2.118, 27.422).. controls (2.039, 27.761) and (2.059, 28.132) .. (2.23, 28.434);
            \draw[knot_diagram] (2.23, 28.434).. controls (2.316, 28.585) and (2.438, 28.716) .. (2.587, 28.805).. controls (2.737, 28.894) and (2.913, 28.939) .. (3.086, 28.926).. controls (3.251, 28.913) and (3.412, 28.846) .. (3.542, 28.743).. controls (3.673, 28.641) and (3.774, 28.503) .. (3.841, 28.35).. controls (3.889, 28.238) and (3.919, 28.119) .. (3.932, 27.997);
            \draw[->, knot_diagram] (2.199, 27.851).. controls (2.337, 27.886) and (2.479, 27.911) .. (2.619, 27.93).. controls (2.908, 27.97) and (3.2, 27.991) .. (3.49, 27.959).. controls (3.779, 27.928) and (4.066, 27.844) .. (4.311, 27.688).. controls (4.557, 27.531) and (4.758, 27.299) .. (4.851, 27.023);
            \draw[knot_diagram] (4.851, 27.023).. controls (4.897, 26.885) and (4.916, 26.737) .. (4.903, 26.593).. controls (4.889, 26.448) and (4.842, 26.306) .. (4.763, 26.183).. controls (4.645, 25.998) and (4.456, 25.862) .. (4.247, 25.794).. controls (4.037, 25.727) and (3.809, 25.726) .. (3.596, 25.778).. controls (3.388, 25.828) and (3.197, 25.926) .. (3.026, 26.054);

            \draw (2.898, 28.46) node {$a_3$};
            \draw (1.7, 26.712) node {$a_1$};
            \draw (2.96, 27.213) node {$a_4$};
            \draw (4.053, 26.573) node {$a_2$};
            \draw (4.624, 28.661) node {$a_5$};
            \draw (1.854, 28.007) node {$c_1$};
            \draw (4.253, 27.998) node {$c_3$};
            \draw (2.956, 25.798) node {$c_2$};
        \end{tikzpicture}
        \caption{\label{Figure:TrefoilExample}Diagram of the <<negative trefoil>> knot}
    \end{figure}

    By definition:
    \begin{center}
        \begin{align*}
            &R_{a_1} = (x_1 a b)\cdot (b^{-1} a^{-1}x_2^{-1}) = x_1 x_2^{-1},\\
            &R_{a_2} = (x_2 a b)\cdot (b^{-1} a^{-1} x_3^{-1}) = x_2 x_3^{-1},\\
            &R_{a_3} = (x_3 a b)\cdot (b^{-1} a^{-1} x_1^{-1}) = x_3 x_1^{-1},\\
            &R_{a_4} = (b^{-1} x_1 b)\cdot (b^{-1} x_2 b) \cdot (b^{-1} x_3 b) = b^{-1} x_1 x_2 x_3 b,\\
            &R_{a_5} = (a^{-1} x_1^{-1} a)\cdot (a^{-1} x_3^{-1} a)\cdot (a^{-1} x_2^{-1} a) = a x_1^{-1}x_3^{-1}x_2^{-1}a.
        \end{align*}
    \end{center}

    As a result we get 
    \begin{multline*}
        \mathcal{E}(D) = <a, b, x_1, x_2, x_3 | x_1 x_2^{-1}, x_2 x_3^{-1}, x_3 x_1^{-1}, b^{-1}x_1 x_2 x_3 b, a^{-1} x_1^{-1} x_2^{-1} x_3^{-1} a> = \\ = <a, b, x | x^3 = 1>.
    \end{multline*}
\end{example}

\begin{theorem}
    \label{Theorem:ElectricGroup}
    Let $D_1, D_2$ be two diagrams of an oriented link $K$ on an oriented 2-sphere $S^2$. Then the groups $\mathcal{E}(D_1)$ and $\mathcal{E}(D_2)$ are isomorphic.
\end{theorem}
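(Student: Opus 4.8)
The plan is to use the classical fact that any two diagrams of the same oriented link on $S^2$ are connected by a finite sequence of ambient isotopies of the sphere and oriented Reidemeister moves $\Omega_1, \Omega_2, \Omega_3$, and then to check that $\mathcal{E}$ is preserved up to isomorphism by each such step, all of the work being done on the group presentations. An ambient isotopy of $S^2$ alters none of the combinatorial data entering the construction — the crossings, their signs, the incidences of corners with regions, or the orientation-induced direction in which a region is traversed — so it does not change the presentation at all beyond relabelling; one also notes that the presentation does not depend on the starting point chosen on the boundary of a region, because cyclically permuting a relator only conjugates it, and that the count of $n+2$ regions (hence the whole set-up) is available once one restricts to connected diagrams, which every link admits. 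Thus everything reduces to the three Reidemeister moves.

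A Reidemeister move takes place inside a disk $\Delta$; outside $\Delta$ the diagram, and therefore every crossing, region and corner–region incidence not meeting $\Delta$, is untouched. Hence the presentation of $\mathcal{E}(D')$ is obtained from that of $\mathcal{E}(D)$ by adjoining the generators of the crossings created inside $\Delta$ and modifying the relators $R_{a_i}$ of the finitely many regions whose boundary enters $\Delta$, all other relators being literally the same. Two elementary observations, both immediate from Figure~\ref{Figure:CornersInNeighbourhood}, drive the bookkeeping. First, a region incident to a crossing $c_j$ at a single corner yields a relator that forces $x_j$ to equal $1$ or $b^{-1}a^{-1}$, and a region incident at one corner each of two crossings $c_i, c_j$ yields a relator expressing $x_i$ as an explicit word in $x_j, a, b$; such relators let one eliminate the corresponding generator by a Tietze transformation. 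Second, two corner words of the same crossing that are consecutive along the boundary of a region multiply to a word in $a$ and $b$ alone, and this word becomes trivial once the substitutions above are made.

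For $\Omega_1$ the move adds one crossing $c$ and one new monogon region, and inserts corner words of $c$ into the relators of the at most two regions bordering the kink. The monogon relator pins $x_c$; eliminating $x_c$ and substituting, one checks — case by case over the four types of kink — that the inserted corner words cancel, leaving exactly the presentation of $\mathcal{E}(D)$. For $\Omega_2$ the move adds two crossings $c_1, c_2$, and among the new and modified relators there is a bigon-type relation expressing $x_{c_1}$ through $x_{c_2}, a, b$; after substituting, a second affected relator pins $x_{c_2}$ as well, both new generators are eliminated, and the rest collapses to the presentation of $\mathcal{E}(D)$, the two orientation-variants being handled identically. For $\Omega_3$ neither the number of crossings nor the number of regions changes, so one must compare two presentations on the same generating set — the untouched part together with $a, b$ and the generators $x_1, x_2, x_3$ of the triangle's crossings. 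One writes out the relators of the regions incident to the triangle before and after the move and verifies, by a direct computation in the free group on $a, b, x_1, x_2, x_3$, that the two families of relators generate the same normal subgroup; equivalently, one exhibits the isomorphism $\mathcal{E}(D) \to \mathcal{E}(D')$ that fixes $a, b$ and sends each $x_i$ to an explicit word.

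The last move is where the actual difficulty lies. Unlike $\Omega_1$ and $\Omega_2$, the triangle move supplies no freshly created generator to discard, so the argument cannot be reduced to a chain of Tietze eliminations; one is forced into a head-on comparison of two presentations, and there are several oriented versions of $\Omega_3$ to dispose of (passing to a minimal generating set of oriented Reidemeister moves, as is possible, trims this). Setting up the corners, the crossing signs and the orientation of $S^2$ consistently for each remaining variant — in effect, a careful application of Figure~\ref{Figure:CornersInNeighbourhood} — is the technical core; once it is in place, each case is a mechanical free-group verification.
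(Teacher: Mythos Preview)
Your proposal is correct and follows the same approach as the paper: verify invariance under each oriented Reidemeister move via Tietze transformations on the presentation, eliminating the new crossing generators for $\Omega_1$ and $\Omega_2$ and exhibiting an explicit change of generators for $\Omega_3$. The paper trims the casework by invoking Polyak's minimal generating set of oriented Reidemeister moves (one variant each of $R_1^l, R_1^r, R_2, R_3$) and, for the single $R_3$ case, writes down the substitution $y_1 = a^{-1}x_2^{-1}x_3 a$, $y_2 = a^{-1}x_2^{-1}x_1 a$, $y_3 = a^{-1}x_1^{-1}x_3 a$ directly rather than leaving it as a free-group verification.
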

\begin{proof}
    It's known that any two diagrams of the same oriented links are connected by a finite sequence of Reidemeister moves, as shown in the figure \ref{Figure:OrientedReidemeister} (\cite{PRM}). It's enough to prove the theorem in the cases where $D_2$ is obtained from $D_1$ by one of these moves.

    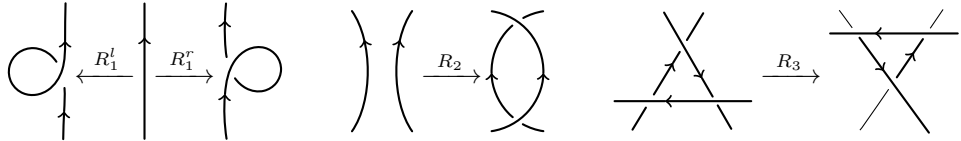
\begin{figure}[ht]
        \begin{tikzpicture}[scale=0.78, baseline={([yshift=-1ex]current bounding box.center)}]
            \draw[knot_diagram] (1.55, 28.873).. controls (1.55, 29.085) and (1.553, 29.297) .. (1.558, 29.51);
            \draw[knot_diagram, ->] (1.402, 28.513).. controls (1.377, 28.55) and (1.348, 28.583) .. (1.316, 28.614).. controls (1.235, 28.69) and (1.128, 28.74) .. (1.017, 28.745).. controls (0.923, 28.75) and (0.828, 28.72) .. (0.752, 28.664).. controls (0.677, 28.607) and (0.623, 28.524) .. (0.602, 28.432).. controls (0.58, 28.34) and (0.593, 28.241) .. (0.636, 28.158).. controls (0.68, 28.074) and (0.754, 28.007) .. (0.842, 27.973).. controls (0.909, 27.946) and (0.984, 27.939) .. (1.056, 27.95).. controls (1.128, 27.96) and (1.197, 27.989) .. (1.257, 28.029).. controls (1.377, 28.111) and (1.46, 28.24) .. (1.502, 28.379).. controls (1.551, 28.538) and (1.55, 28.707) .. (1.55, 28.873);
            \draw[knot_diagram] (1.534, 27.67).. controls (1.541, 27.802) and (1.549, 27.935) .. (1.54, 28.066);
            \draw[knot_diagram, ->] (1.518, 27.193).. controls (1.521, 27.352) and (1.526, 27.511) .. (1.534, 27.67);
        \end{tikzpicture}
        $\xleftarrow{\ R_1^l\ }$
        \begin{tikzpicture}[scale=0.9, baseline={([yshift=-1ex]current bounding box.center)}]
            \begin{scope}[decoration={
                markings,
                mark=at position 0.75 with {\arrow{>}}}
                ]
                \draw[knot_diagram, postaction={decorate}] (0, -1) -- (0, 1);
            \end{scope}
        \end{tikzpicture}
        $\xrightarrow{\ R_1^r\ }$
        \begin{tikzpicture}[scale=0.75, baseline={([yshift=-1ex]current bounding box.center)}]
            \draw[knot_diagram] (1.477, 28.515).. controls (1.48, 28.615) and (1.487, 28.714) .. (1.496, 28.813);
            \draw[knot_diagram, ->] (1.506, 27.863).. controls (1.489, 27.96) and (1.482, 28.061) .. (1.478, 28.159).. controls (1.473, 28.278) and (1.473, 28.397) .. (1.477, 28.515);
            \draw[knot_diagram] (1.478, 27.21).. controls (1.495, 27.416) and (1.533, 27.628) .. (1.651, 27.797).. controls (1.71, 27.882) and (1.789, 27.954) .. (1.883, 27.997).. controls (1.977, 28.04) and (2.086, 28.053) .. (2.185, 28.025).. controls (2.268, 28.002) and (2.343, 27.95) .. (2.394, 27.88).. controls (2.445, 27.811) and (2.473, 27.724) .. (2.471, 27.638).. controls (2.468, 27.552) and (2.437, 27.466) .. (2.383, 27.399).. controls (2.328, 27.332) and (2.251, 27.284) .. (2.167, 27.264).. controls (2.062, 27.239) and (1.948, 27.258) .. (1.853, 27.309).. controls (1.774, 27.351) and (1.707, 27.413) .. (1.655, 27.486);
            \draw[knot_diagram, ->] (1.505, 26.413).. controls (1.465, 26.676) and (1.456, 26.944) .. (1.478, 27.21);
        \end{tikzpicture}
        \hspace{0.7cm}
        \begin{tikzpicture}[scale=0.8, baseline={([yshift=-1ex]current bounding box.center)}]
            \begin{scope}[decoration={
                markings,
                mark=at position 0.75 with {\arrow{>}}}
                ]
                \draw[knot_diagram, postaction={decorate}] (-0.5, -1) .. controls (-0.15, -0.5) and (-0.15, 0.5) .. (-0.5, 1);
                \draw[knot_diagram, postaction={decorate}] (0.5, -1) .. controls (0.15, -0.5) and (0.15, 0.5) .. (0.5, 1);
            \end{scope}
        \end{tikzpicture}
        $\xrightarrow{\ R_2\ }$
        \begin{tikzpicture}[scale=0.75, baseline={([yshift=-1ex]current bounding box.center)}]
            \draw[knot_diagram] (3.675, 27.45).. controls (3.788, 27.388) and (3.912, 27.345) .. (4.039, 27.327);
            \draw[knot_diagram, <-] (3.121, 28.357).. controls (3.129, 28.107) and (3.227, 27.861) .. (3.393, 27.675).. controls (3.425, 27.639) and (3.459, 27.606) .. (3.496, 27.575);
            \draw[knot_diagram] (3.489, 29.2).. controls (3.446, 29.163) and (3.406, 29.122) .. (3.37, 29.079).. controls (3.203, 28.879) and (3.113, 28.617) .. (3.121, 28.357);
            \draw[knot_diagram] (4.035, 29.454).. controls (3.912, 29.435) and (3.793, 29.394) .. (3.683, 29.335);
            \draw[knot_diagram, <-] (4.039, 28.357).. controls (4.031, 28.107) and (3.933, 27.861) .. (3.767, 27.675).. controls (3.601, 27.489) and (3.368, 27.363) .. (3.121, 27.327);
            \draw[knot_diagram] (3.125, 29.454).. controls (3.383, 29.415) and (3.624, 29.279) .. (3.79, 29.079).. controls (3.957, 28.879) and (4.047, 28.617) .. (4.039, 28.357);
        \end{tikzpicture}
        \hspace{0.7cm}
        \begin{tikzpicture}[scale=0.75, baseline={([yshift=-1ex]current bounding box.center)}]
            \draw[knot_diagram] (5.443, 27.707) -- (4.55, 27.707);
            \draw[knot_diagram, ->] (6.973, 27.707) -- (5.443, 27.707);
            \draw[knot_diagram] (6.379, 27.632) -- (6.619, 27.216);
            \draw[knot_diagram] (6.101, 28.113) -- (6.289, 27.788);
            \draw[knot_diagram, ->] (5.428, 29.279) -- (6.101, 28.113);
            \draw[knot_diagram] (5.833, 28.791) -- (6.122, 29.267);
            \draw[knot_diagram] (5.588, 28.387) -- (5.706, 28.581);
            \draw[knot_diagram, ->] (5.247, 27.825) -- (5.588, 28.387);
            \draw[knot_diagram] (4.868, 27.201) -- (5.106, 27.593);
        \end{tikzpicture}
        $\xrightarrow{\ R_3\ }$
        \begin{tikzpicture}[scale=0.75, baseline={([yshift=-1ex]current bounding box.center)}]
            \begin{scope}[decoration={
                markings,
                mark=at position 0.65 with {\arrow{>}}}
                ]
                \draw[knot_diagram, postaction={decorate}] (9.369, 28.925) -- (7.107, 28.925);
                \draw[knot_diagram, postaction={decorate}] (8.289, 28.146).. controls (8.448, 28.379) and (8.612, 28.62) .. (8.752, 28.825);
            \end{scope}
            \begin{scope}[decoration={
                markings,
                mark=at position 0.35 with {\arrow{>}}}
                ]
                \draw[knot_diagram, postaction={decorate}] (7.628, 28.849).. controls (8.097, 28.209) and (8.858, 27.165) .. (8.858, 27.165);
            \end{scope}
            \draw[] (7.268, 29.337).. controls (7.272, 29.333) and (7.363, 29.21) .. (7.499, 29.024);
            \draw[] (8.881, 29.016).. controls (9.018, 29.217) and (9.109, 29.351) .. (9.109, 29.351);
            \draw (7.635, 27.21).. controls (7.648, 27.215) and (7.874, 27.541) .. (8.145, 27.936);
        \end{tikzpicture}
        \caption{\label{Figure:OrientedReidemeister}Reidemeister moves for oriented links}
    \end{figure}

    \emph{$R_1^r$ and $R_1^l$ moves}. Consider the case of the move $R_1^r$. Denote $a_1\in A(D_1)$ the left side and $a_2\in A(D_1)$ the right side of the diagram $D_1$. Denote the corresponding areas of $A(D_2)$ by $a_1'$ and $a_2'$, and let $\overline{a}\in A(D_2)$ be a new area (figure \ref{Figure:R1Move}).

    \begin{figure}[ht]
        \begin{tikzpicture}[scale=1.05]
            \begin{scope}[decoration={
                markings,
                mark=at position 0.75 with {\arrow{>}}}
                ]
                \draw[knot_diagram, postaction={decorate}] (0, -1) -- (0, 1);
            \end{scope}
            \draw (-0.5, 0) node {$a_1$};
            \draw (0.5, 0) node {$a_2$};
        \end{tikzpicture}
        \hspace{1.5cm}
        \begin{tikzpicture}[scale=0.9]
            \draw[knot_diagram] (1.477, 28.515).. controls (1.48, 28.615) and (1.487, 28.714) .. (1.496, 28.813);
            \draw[knot_diagram, ->] (1.506, 27.863).. controls (1.489, 27.96) and (1.482, 28.061) .. (1.478, 28.159).. controls (1.473, 28.278) and (1.473, 28.397) .. (1.477, 28.515);
            \draw[knot_diagram] (1.478, 27.21).. controls (1.495, 27.416) and (1.533, 27.628) .. (1.651, 27.797).. controls (1.71, 27.882) and (1.789, 27.954) .. (1.883, 27.997).. controls (1.977, 28.04) and (2.086, 28.053) .. (2.185, 28.025).. controls (2.268, 28.002) and (2.343, 27.95) .. (2.394, 27.88).. controls (2.445, 27.811) and (2.473, 27.724) .. (2.471, 27.638).. controls (2.468, 27.552) and (2.437, 27.466) .. (2.383, 27.399).. controls (2.328, 27.332) and (2.251, 27.284) .. (2.167, 27.264).. controls (2.062, 27.239) and (1.948, 27.258) .. (1.853, 27.309).. controls (1.774, 27.351) and (1.707, 27.413) .. (1.655, 27.486);
            \draw[knot_diagram, ->] (1.505, 26.413).. controls (1.465, 26.676) and (1.456, 26.944) .. (1.478, 27.21);

            \draw (1.0, 27.65) node {$a_1'$};
            \draw (3.0, 27.65) node {$a_2'$};
            \draw (2.05, 27.65) node {$\overline{a}$};
        \end{tikzpicture}
        \caption{\label{Figure:R1Move}Areas before (on the left) and after (on the right) move $R_1^r$}
    \end{figure}
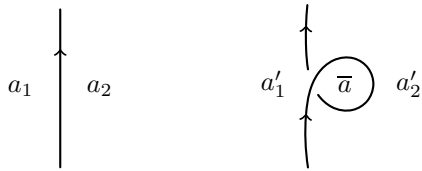

    Let $\mathcal{E}(D_1) = <\mathcal{G} | \mathcal{R}>$, where $\mathcal{G}$ is the set of generators and $\mathcal{R}$ is the set of relations. Let $x$ be a new generator assigned to the new crossing of the diagram $D_2$. Then $\mathcal{E}(D_2) = <\mathcal{G}, x | \mathcal{R}'>$, where $$\mathcal{R}' = (\mathcal{R}\setminus \{R_{a_1}, R_{a_2}\}) \cup \{R_{a_1'}, R_{a_2'}, R_{\overline{a}}\}.$$ Find that 
    \begin{align*}
        &R_{a_1'} = R_{a_1}\cdot a^{-1} x a, \\
        &R_{a_2'} = R_{a_2}\cdot b^{-1}a^{-1} x^{-1}\cdot x ab, \\
        &R_{\overline{a}} = b^{-1}x^{-1}b.
    \end{align*}

    It's clear that the generator $x$ can be excluded, so the groups $\mathcal{E}(D_1)$ and $\mathcal{E}(D_2)$ are isomorphic.

    For the case of the move $R_1^l$ the proof is similar.

    \emph{$R_2$ move}. Let $a_1, a_2, a_3\in A(D_1)$ denote the left, centre and right areas of the diagram $D_1$ respectively, and $a_1', a_2', a_2'', a_3'\in A(D_2)$ denote the corresponding areas of the diagram $D_2$ (Figure \ref{Figure:R2Move}). The area $a_2$ is split into two areas $a_2', a_2''$ and the new area $\overline{a}\in A(D_2)$ appears.

    \begin{figure}[ht]
        \begin{tikzpicture}[scale=1.1, baseline={([yshift=-10.5ex]current bounding box.center)}]
            \begin{scope}[decoration={
                markings,
                mark=at position 0.75 with {\arrow{>}}}
                ]
                \draw[knot_diagram, postaction={decorate}] (-0.5, -1) .. controls (-0.15, -0.5) and (-0.15, 0.5) .. (-0.5, 1);
                \draw[knot_diagram, postaction={decorate}] (0.5, -1) .. controls (0.15, -0.5) and (0.15, 0.5) .. (0.5, 1);
                \draw (-0.75, 0.0) node {$a_1$};
                \draw (0.0, 0.0) node {$a_2$};
                \draw (0.75, 0.0) node {$a_3$};
            \end{scope}
        \end{tikzpicture}
        \hspace{1.2cm}
        \begin{tikzpicture}[scale=1.0]
            \draw[knot_diagram] (3.675, 27.45).. controls (3.788, 27.388) and (3.912, 27.345) .. (4.039, 27.327);
            \draw[knot_diagram, <-] (3.121, 28.357).. controls (3.129, 28.107) and (3.227, 27.861) .. (3.393, 27.675).. controls (3.425, 27.639) and (3.459, 27.606) .. (3.496, 27.575);
            \draw[knot_diagram] (3.489, 29.2).. controls (3.446, 29.163) and (3.406, 29.122) .. (3.37, 29.079).. controls (3.203, 28.879) and (3.113, 28.617) .. (3.121, 28.357);
            \draw[knot_diagram] (4.035, 29.454).. controls (3.912, 29.435) and (3.793, 29.394) .. (3.683, 29.335);
            \draw[knot_diagram, <-] (4.039, 28.357).. controls (4.031, 28.107) and (3.933, 27.861) .. (3.767, 27.675).. controls (3.601, 27.489) and (3.368, 27.363) .. (3.121, 27.327);
            \draw[knot_diagram] (3.125, 29.454).. controls (3.383, 29.415) and (3.624, 29.279) .. (3.79, 29.079).. controls (3.957, 28.879) and (4.047, 28.617) .. (4.039, 28.357);

            \draw (3.6, 28.4) node {$\overline{a}$};
            \draw (2.6, 28.4) node {$a_1'$};
            \draw (4.6, 28.4) node {$a_3'$};
            \draw (3.6, 29.8) node {$a_2''$};
            \draw (3.6, 27.1) node {$a_2'$};
        \end{tikzpicture}
        \caption{\label{Figure:R2Move}Areas before (on the left) and after (on the right) move $R_2$}
    \end{figure}
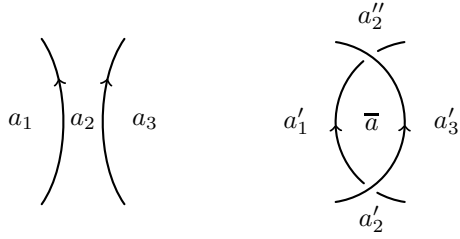

    Let $\mathcal{E}(D_1) = <\mathcal{G} | \mathcal{R}>$. Let $w_1, w_2$ be two words in the alphabet $\mathcal{G}$ such that $R_{a_2} = w_1\cdot w_2$, where $w_1$ corresponds to the lower part of $a_2$ and $w_2$ to the upper part.
    
    Let $x, y$ be new generators of the group $\mathcal{E}(D_2)$ ($x$ for the positive crossing and $y$ for the negative one). Then $\mathcal{E}(D_2) = <\mathcal{G}, x, y | \mathcal{R}'>$, where $$\mathcal{R}' = (\mathcal{R}\setminus \{R_{a_1}, R_{a_2}, R_{a_3}\})\cup \{R_{a_1'}, R_{a_2'}, R_{a_2''}, R_{\overline{a}}\}.$$

    Find that 
    \begin{align*}
        &R_{a_1'} = R_{a_1}\cdot a^{-1} x a \cdot a^{-1}y^{-1}a, \\
        &R_{a_3'} = R_{a_3}\cdot b^{-1} yb \cdot b^{-1}x^{-1}b, \\
        &R_{\overline{a}} = b^{-1}a^{-1}x^{-1}\cdot ya b, \\
        &R_{a_2'} = w_1\cdot xab, \\
        &R_{a_2''} = w_2\cdot b^{-1}a^{-1} y^{-1}.
    \end{align*}

    The relation $R_{\overline{a}}$ allows to exclude the generator $y$ (and replace everywhere $y$ by $x$). As a result, $R_{a_1'}$ and $R_{a_3'}$ will coincide with $R_{a_1}$ and $R_{a_3}$ respectively. Next, the relation $R_{a_2''} = w_2b^{-1}a^{-1}x^{-1}$ allows to exclude the generator $x$. After that the relation $R_{a_2'}$ will coincide with $R_{a_2}$.

    \emph{$R_3$ move}. Denote $c_1, c_2, c_3\in C(D_1)$ the crossings of the diagram $D_1$ used in the move, and denote $c_1', c_2', c_3'\in C(D_2)$ the corresponding crossings of the diagram $D_2$. Also denote $a_1, \ldots, a_7\in A(D_1)$ areas of the diagram $D_1$, and $a_1',\ldots, a_7'\in A(D_2)$ corresponding areas of the diagram $D_2$ (Figure \ref{Figure:R3Move}). Then 
    \begin{center}
        $\mathcal{E}(D_1) = <\mathcal{G}, x_1, x_2, x_3 | \mathcal{R}, R_{a_1}, \ldots, R_{a_7}>$,

        $\mathcal{E}(D_2) = <\mathcal{G}, y_1, y_2, y_3 | \mathcal{R}, R_{a_1'}, \ldots, R_{a_7'}>$,
    \end{center}
    where the generators $x_1, x_2, x_3$ correspond to the crossings $c_1, c_2, c_3$, and the generators $y_1, y_2, y_3$ correspond to the crossings $c_1', c_2', c_3'$. All other generators (the set $\mathcal{G}$) of the groups $\mathcal{E}(D_1)$ and $\mathcal{E}(D_2)$ are the same. The set of relations $\mathcal{R}$ is also the same for the groups $\mathcal{E}(D_1)$ and $\mathcal{E}(D_2)$, because it corresponds to areas which are not used in the move $R_3$.

    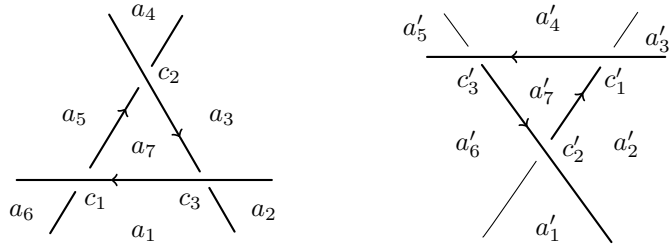
\begin{figure}[ht]
        \begin{tikzpicture}[scale=1.4]
            \draw[knot_diagram] (5.443, 27.707) -- (4.55, 27.707);
            \draw[knot_diagram, ->] (6.973, 27.707) -- (5.443, 27.707);
            \draw[knot_diagram] (6.379, 27.632) -- (6.619, 27.216);
            \draw[knot_diagram] (6.101, 28.113) -- (6.289, 27.788);
            \draw[knot_diagram, ->] (5.428, 29.279) -- (6.101, 28.113);
            \draw[knot_diagram] (5.833, 28.791) -- (6.122, 29.267);
            \draw[knot_diagram] (5.588, 28.387) -- (5.706, 28.581);
            \draw[knot_diagram, ->] (5.247, 27.825) -- (5.588, 28.387);
            \draw[knot_diagram] (4.868, 27.201) -- (5.106, 27.593);

            \draw (5.76, 27.2) node {$a_1$};
            \draw (6.9, 27.4) node {$a_2$};
            \draw (6.5, 28.3) node {$a_3$};
            \draw (5.76, 29.3) node {$a_4$};
            \draw (5.1, 28.3) node {$a_5$};
            \draw (4.6, 27.4) node {$a_6$};
            \draw (5.76, 28.0) node {$a_7$};

            \draw (5.3, 27.5) node {$c_1$};
            \draw (6.2, 27.5) node {$c_3$};
            \draw (6.0, 28.7) node {$c_2$};
        \end{tikzpicture}
        \hspace{1.2cm}
        \begin{tikzpicture}[scale=1.4]
            \begin{scope}[decoration={
                markings,
                mark=at position 0.65 with {\arrow{>}}}
                ]
                \draw[knot_diagram, postaction={decorate}] (9.369, 28.925) -- (7.107, 28.925);
                \draw[knot_diagram, postaction={decorate}] (8.289, 28.146).. controls (8.448, 28.379) and (8.612, 28.62) .. (8.752, 28.825);
            \end{scope}
            \begin{scope}[decoration={
                markings,
                mark=at position 0.35 with {\arrow{>}}}
                ]
                \draw[knot_diagram, postaction={decorate}] (7.628, 28.849).. controls (8.097, 28.209) and (8.858, 27.165) .. (8.858, 27.165);
            \end{scope}
            \draw[] (7.268, 29.337).. controls (7.272, 29.333) and (7.363, 29.21) .. (7.499, 29.024);
            \draw[] (8.881, 29.016).. controls (9.018, 29.217) and (9.109, 29.351) .. (9.109, 29.351);
            \draw (7.635, 27.21).. controls (7.648, 27.215) and (7.874, 27.541) .. (8.145, 27.936);

            \draw (8.26, 27.3) node {$a_1'$};
            \draw (9.0, 28.1) node {$a_2'$};
            \draw (9.3, 29.1) node {$a_3'$};
            \draw (8.26, 29.3) node {$a_4'$};
            \draw (7.0, 29.2) node {$a_5'$};
            \draw (7.5, 28.1) node {$a_6'$};
            \draw (8.2, 28.6) node {$a_7'$};

            \draw (8.9, 28.7) node {$c_1'$};
            \draw (7.5, 28.7) node {$c_3'$};
            \draw (8.5, 28.05) node {$c_2'$};
        \end{tikzpicture}
        \caption{\label{Figure:R3Move}Areas and crossings before (on the left) and after (on the right) move $R_3$}
    \end{figure}

    Find that
    \begin{center}
        $\begin{array}{ll}
            R_{a_1} = b^{-1} a^{-1} x_3^{-1}\cdot x_1 a b \cdot w_1, & R_{a_1'} = b^{-1} y_3^{-1} b\cdot w_1, \\
            R_{a_1} = a^{-1} x_3 a \cdot w_2, & R_{a_1'} = y_1 ab \cdot b^{-1} a^{-1} y_3^{-1} \cdot w_2, \\
            R_{a_3} = b^{-1} a^{-1} x_2^{-1}\cdot x_3 ab\cdot w_3, & R_{a_3'} = b^{-1} y_1 b\cdot w_3, \\
            R_{a_4} = a^{-1} x_2 a \cdot w_4, & R_{a_4'} = y_2ab\cdot b^{-1} a^{-1} y_1^{-1}\cdot w_4, \\
            R_{a_5} = b^{-1} a^{-1}x_1^{-1}\cdot x_2 ab \cdot w_5, & R_{a_5'} = b^{-1} y_2^{-1} b\cdot w_5, \\
            R_{a_6} = a^{-1} x_1^{-1} a \cdot w_6, & R_{a_6'} = y_3 ab\cdot b^{-1} a^{-1} y_2^{-1}\cdot w_6, \\
            R_{a_7} = b^{-1}x_1 b\cdot b^{-1} x_3^{-1} b \cdot b^{-1} x_2^{-1} b, & R_{a_7'} = a^{-1} y_3 a\cdot a^{-1} y_1^{-1} a\cdot a^{-1}y_2 a,
        \end{array}$
    \end{center}
    where $w_1, \ldots, w_6$ are fixed words in the alphabet $\mathcal{G}$.

    It's easy to see that the presentation of the group $\mathcal{E}(D_2)$ can be obtained from the presentation of the group $\mathcal{E}(D_1)$ by replacing 
    \begin{center}
        $\begin{array}{l}
            y_1 = a^{-1}x_2^{-1} x_3 a, \\
            y_2 = a^{-1} x_2^{-1} x_1 a, \\
            y_3 = a^{-1} x_1^{-1} x_3 a.
        \end{array}$
    \end{center}
    So these groups are isomorphic.
\end{proof}

For any oriented link $K$ we can define the group $\mathcal{E}(K)$ which coincides with the group $\mathcal{E}(D)$ for any diagram $D$ of $K$. This group $\mathcal{E}(K)$ is called the \emph{electric group} of the link $K$.

\begin{remark}
    It's possible to define \emph{reduced electric group} $\mathcal{E}_r(K)$ in the same way as $\mathcal{E}(K)$, but making the generators $a$ and $b$ trivial.
\end{remark}

\begin{example}
    The reduced electric group of the <<trefoil>> knot is isomorphic to $\mathbb{Z}_3.$
\end{example}

\begin{proposition}
    \label{Proposition:InverseOrientation}
    Let $K$ be an oriented knot, and let $K'$ be the same knot with inverse orientation. Then the groups $\mathcal{E}_r(K)$ and $\mathcal{E}_r(K')$ are isomorphic.
\end{proposition}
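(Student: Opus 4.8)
The plan is to realise $K'$ by the diagram $D'$ obtained from an arbitrary diagram $D$ of $K$ by reversing the orientation of every arc, and then to write down an explicit isomorphism between the two reduced presentations. First I would collect the elementary facts. The underlying $4$-valent graph on $S^2$, its crossing set $C(D)$, its set of areas $A(D)$, and the incidence relation ``corner $Q$ of crossing $c$ belongs to area $a$'' are literally the same for $D$ and $D'$; moreover reversing both strands at a crossing does not change its sign, so a crossing is positive (resp.\ negative) in $D$ if and only if it is positive (resp.\ negative) in $D'$. Hence, if we fix one basepoint and walk around the boundary of an area $a$ in the direction determined by the orientation of $S^2$, we meet exactly the same cyclic sequence of corners in $D$ and in $D'$; only the words $w_{c_j}$ attached to those corners can change.

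The core of the argument is to control that change. Since the statement is about the \emph{reduced} group, I set $a=b=1$ throughout, so that each of the eight words in figure \ref{Figure:CornersInNeighbourhood} collapses to $x_j^{\pm1}$. The key combinatorial fact is that, in each of the two local pictures of figure \ref{Figure:CornersInNeighbourhood}, the reduced words attached to a corner and to the opposite (antipodal) corner are mutually inverse, i.e.\ $x_j^{+1}$ versus $x_j^{-1}$ — a one-line check of the four antipodal pairs. Now reversing the two arrows at a crossing is precisely a $180^\circ$ rotation of the local picture about the crossing point, and this rotation commutes with any rigid motion one uses to bring the crossing into the standard form of figure \ref{Figure:CornersInNeighbourhood} (one must use a rotation, never a reflection, to reach standard form, so that signs are respected). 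Since such a rotation interchanges each corner with its antipode, it follows that the reduced word assigned to a corner $Q$ of $c$ in $D'$ is obtained from the reduced word assigned to $Q$ of $c$ in $D$ by replacing $x_c$ with the new generator $x_c'$ and inverting the exponent. Equivalently, letting $\phi$ be the isomorphism of free groups $F(\{x_c:c\in C(D)\})\to F(\{x_c':c\in C(D')\})$ determined by $x_c\mapsto (x_c')^{-1}$, and reading corners in the common cyclic order, one gets $\overline{R}_a(D')=\phi\bigl(\overline{R}_a(D)\bigr)$ for every $a\in A(D)$, where $\overline{R}_a$ denotes the reduced relator.

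Then the proof concludes formally: $\phi$ is a bijection on generators and maps the defining relator set of $\mathcal{E}_r(D)$ bijectively onto that of $\mathcal{E}_r(D')$, hence carries the corresponding normal closures onto one another and descends to an isomorphism $\mathcal{E}_r(D)\cong\mathcal{E}_r(D')$. Since the reduced electric group is a well-defined invariant of oriented links (the reduced version of Theorem \ref{Theorem:ElectricGroup}, as noted in the remark above), this yields $\mathcal{E}_r(K)\cong\mathcal{E}_r(D)\cong\mathcal{E}_r(D')\cong\mathcal{E}_r(K')$.

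I expect the only genuinely delicate step to be the middle one: checking carefully that ``reverse both arrows'' coincides with the local $180^\circ$ rotation in a way compatible with the recipe of figure \ref{Figure:CornersInNeighbourhood}, and that one therefore reads the word of the antipodal corner. It is worth stressing that the antipodal-corner fact holds only \emph{after} setting $a=b=1$ (for instance $a^{-1}x_j a$ and $b^{-1}x_j^{-1}b$ are not inverse in $\mathcal{E}(D)$), which is exactly why the proposition is stated for $\mathcal{E}_r$ and not for $\mathcal{E}$.
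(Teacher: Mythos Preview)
Your proposal is correct and follows essentially the same approach as the paper: both use the diagram $D'$ obtained from $D$ by reversing orientation and define the isomorphism by $x_i\mapsto \xi_i^{-1}$, checking (you via the $180^\circ$-rotation/antipodal-corner argument, the paper via a picture) that in the reduced setting each corner word is replaced by its inverse, so relators go to relators. Your write-up simply supplies more justification for the step the paper leaves to Figure~\ref{Figure:OrientationFlip}.
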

\begin{proof}
    Choose diagrams $D$ and $D'$ of knots $K$ and $K'$ which differ only in orientation. Let 
    \begin{center}
        $\mathcal{E}_r(D) = <x_1, \ldots, x_n | r_1, \ldots, r_k>$ and $\mathcal{E}_r(D) = <\xi_1, \ldots, \xi_n | \rho_1, \ldots, \rho_k>$.
    \end{center}

    The isomorphism between the groups $\mathcal{E}_r(D) $ and $\mathcal{E}_{r}(D')$ is defined by the map $x_i\mapsto \xi_i^{-1}$ (figure \ref{Figure:OrientationFlip}).

    \begin{figure}[ht]
        \begin{tikzpicture}[scale=0.75, baseline={([yshift=-2.0ex]current bounding box.center)}]
            \begin{scope}[decoration={
                markings,
                mark=at position 0.75 with {\arrow{>}}}
                ]
                \draw[knot_diagram, postaction={decorate}] (-1, -1) -- (1, 1);
                \draw[knot_diagram, postaction={decorate}] (1, -1) -- (0.15, -0.15);
                \draw[knot_diagram, postaction={decorate}] (-0.15, 0.15) -- (-1, 1);
            \end{scope}
    
            \draw (0, -0.75) node {$x_i$};
            \draw (1.0, 0) node {$x_i^{-1}$};
            \draw (0, 1.0) node {$x_i^{-1}$};
            \draw (-1.0, 0) node {$x_i$};
        \end{tikzpicture}
        $\mapsto$
        \begin{tikzpicture}[scale=0.75, baseline={([yshift=-2.0ex]current bounding box.center)}]
            \begin{scope}[decoration={
                markings,
                mark=at position 0.25 with {\arrow{<}}}
                ]
                \draw[knot_diagram, postaction={decorate}] (-1, -1) -- (1, 1);
                \draw[knot_diagram, postaction={decorate}] (1, -1) -- (0.15, -0.15);
                \draw[knot_diagram, postaction={decorate}] (-0.15, 0.15) -- (-1, 1);
            \end{scope}
    
            \draw (0, -0.75) node {$\xi_i^{-1}$};
            \draw (1.0, 0) node {$\xi_i$};
            \draw (0, 1.0) node {$\xi_i$};
            \draw (-1.0, 0) node {$\xi_i^{-1}$};
        \end{tikzpicture}
        \hspace{1.5cm}
        \begin{tikzpicture}[scale=0.75, baseline={([yshift=-2.0ex]current bounding box.center)}]
            \begin{scope}[decoration={
                markings,
                mark=at position 0.75 with {\arrow{>}}}
                ]
                \draw[knot_diagram, postaction={decorate}] (-1, -1) -- (-0.15, -0.15);
                \draw[knot_diagram, postaction={decorate}] (0.15, 0.15) -- (1, 1);
                \draw[knot_diagram, postaction={decorate}] (1, -1) -- (-1, 1);
            \end{scope}
    
            \draw (0, -0.75) node {$x_i$};
            \draw (1.0, 0) node {$x_i$};
            \draw (0, 1.0) node {$x_i^{-1}$};
            \draw (-1.0, 0) node {$x_i^{-1}$};
        \end{tikzpicture}
        $\mapsto$
        \begin{tikzpicture}[scale=0.75, baseline={([yshift=-2.0ex]current bounding box.center)}]
            \begin{scope}[decoration={
                markings,
                mark=at position 0.25 with {\arrow{<}}}
                ]
                \draw[knot_diagram, postaction={decorate}] (-1, -1) -- (-0.15, -0.15);
                \draw[knot_diagram, postaction={decorate}] (0.15, 0.15) -- (1, 1);
                \draw[knot_diagram, postaction={decorate}] (1, -1) -- (-1, 1);
            \end{scope}
    
            \draw (0, -0.75) node {$\xi_i^{-1}$};
            \draw (1.0, 0) node {$\xi_i^{-1}$};
            \draw (0, 1.0) node {$\xi_i$};
            \draw (-1.0, 0) node {$\xi_i$};
        \end{tikzpicture}
        \caption{\label{Figure:OrientationFlip}Isomorphism between groups $\mathcal{E}_{r}(D)$ and $\mathcal{E}_{r}(D')$}
    \end{figure}
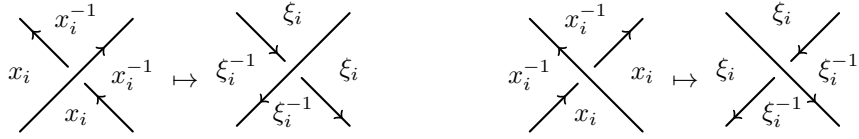
\end{proof}

\section{Group colourings}

The electric group is the group-valued links invariant. One way to study these groups is to consider homomorphisms from the electric group to finite groups.

Let $K$ be an oriented link and let $G$ be a finite group. The number of homomorphisms from the electric group $\mathcal{E}(K)$ to the group $G$ is finite. This number is an integer invariant of the link $K$. Each homomorphism $\xi\colon \mathcal{E}(K)\to G$ can be described in terms of colourings of the diagram of the link $K$.

Fix two group elements $\alpha, \beta\in G$. Let $D$ be a diagram of the link $K$ on an oriented 2-sphere $S^2$. As before, denote $C(D) = \{c_1, \ldots, c_n\}$ the set of crossings of the diagram $D$, and $A(D) = \{a_1, \ldots, a_{n + 2}\}$ the set of complement areas. Let $\xi\colon C(D)\to G$ be a map which assigns a group element to each crossing of the diagram $D$. We will refer to the map $\xi$ as the \emph{colouring} of the diagram $D$.

Let $a_i\in A(D)$ be an area of the diagram $D$, and let $p_i$ be a point on the boundary of $a_i$ that does not coincide with any crossing. Then construct an element $g_{a_i, p_i}\in G$ as follows. At the beginning $g_{a_i, p_i} = e$ --- the neutral element of the group $G$. Then walk along the boundary of $a_i$ starting from $p_i$ and when meeting the positive crossing $c_j$, multiply $g_{a_i, p_i}$ by one of the following elements $\xi(c_j)\alpha \beta$, $\beta^{-1}\alpha^{-1}\xi(c_j)^{-1}$, $\alpha^{-1}\xi(c_j)\alpha$ or $\beta^{-1}\xi(c_j)^{-1}\beta$. Which element to choose depends on which corner the area $a_i$ belongs to in the neighbourhood of the crossing $c_j$ (figure \ref{Figure:GroupCorners} on the left). If the crossing $c_j$ is negative, then multiply $g_{a_i, p_i}$ by one of the following elements $\xi(c_j)\alpha \beta$, $\beta^{-1}\alpha^{-1}\xi(c_j)^{-1}$, $\alpha^{-1}\xi(c_j)^{-1}\alpha$ or $\beta^{-1}\xi(c_j)\beta$ (figure \ref{Figure:GroupCorners} on the right).

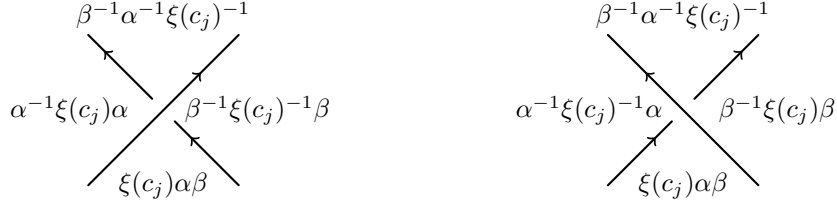
\begin{figure}[ht]
    \begin{tikzpicture}
        \begin{scope}[decoration={
            markings,
            mark=at position 0.75 with {\arrow{>}}}
            ]
            \draw[knot_diagram, postaction={decorate}] (-1, -1) -- (1, 1);
            \draw[knot_diagram, postaction={decorate}] (1, -1) -- (0.15, -0.15);
            \draw[knot_diagram, postaction={decorate}] (-0.15, 0.15) -- (-1, 1);
        \end{scope}

        \draw (0, -1.0) node {$\xi(c_j) \alpha \beta$};
        \draw (1.25, 0) node {$\beta^{-1} \xi(c_j)^{-1} \beta$};
        \draw (0, 1.25) node {$\beta^{-1} \alpha^{-1} \xi(c_j)^{-1}$};
        \draw (-1.25, 0) node {$\alpha^{-1} \xi(c_j) \alpha$};
    \end{tikzpicture}
    \hspace{2cm}
    \begin{tikzpicture}
        \begin{scope}[decoration={
            markings,
            mark=at position 0.75 with {\arrow{>}}}
            ]
            \draw[knot_diagram, postaction={decorate}] (-1, -1) -- (-0.15, -0.15);
            \draw[knot_diagram, postaction={decorate}] (0.15, 0.15) -- (1, 1);
            \draw[knot_diagram, postaction={decorate}] (1, -1) -- (-1, 1);
        \end{scope}

        \draw (0, -1.0) node {$\xi(c_j) \alpha \beta$};
        \draw (1.25, 0) node {$\beta^{-1} \xi(c_j) \beta$};
        \draw (0, 1.25) node {$\beta^{-1} \alpha^{-1} \xi(c_j)^{-1}$};
        \draw (-1.25, 0) node {$\alpha^{-1} \xi(c_j)^{-1} \alpha$};
    \end{tikzpicture}
    \caption{\label{Figure:GroupCorners}Multipliers of the element $g_{a_i, p_i}$ corresponding to the crossing $c_j$ (positive on the left, negative on the right)}
\end{figure}

We will say that the colouring $\xi$ is \emph{proper $(\alpha, \beta)$-colouring} if for each $a_i\in A(D)$: $g_{a_i, p_i} = e$. It's clear that the definition of the proper colouring does not depend on the choice of the points $p_i$ in the boundary of the area $a_i$. Denote the set of all proper $(\alpha, \beta)$-colourings of the diagram $D$ by $Col_{\alpha, \beta}(D)$.

\begin{remark}
    It's clear that $$|Hom(\mathcal{E}(D)\to G)| = \sum\limits_{\alpha, \beta\in G}|Col_{\alpha, \beta}(D)|.$$
\end{remark}

\begin{theorem}
    \label{Theorem:ColouringsBijection}
    Let $G$ be a finite group, $\alpha, \beta\in G$. Let $D_1, D_2$ be two diagrams of the oriented link $K$. Then there is a natural bijection between sets of proper $(\alpha, \beta)$-colourings $Col_{\alpha, \beta}(D_1)$ and $Col_{\alpha, \beta}(D_2)$.
\end{theorem}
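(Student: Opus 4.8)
The plan is to reduce the statement to Theorem~\ref{Theorem:ElectricGroup} by reinterpreting proper colourings as homomorphisms of the electric group. For a fixed diagram $D$ with crossing generators $x_1,\dots,x_n$ and generators $a,b$, I would first observe that $Col_{\alpha,\beta}(D)$ is in canonical bijection with
\[
Hom_{\alpha,\beta}(\mathcal{E}(D),G):=\{\,\phi\in Hom(\mathcal{E}(D),G)\ :\ \phi(a)=\alpha,\ \phi(b)=\beta\,\}.
\]
Indeed, a colouring $\xi\colon C(D)\to G$ is exactly the data of the images $\phi(x_j):=\xi(c_j)$, and, comparing Figure~\ref{Figure:CornersInNeighbourhood} with Figure~\ref{Figure:GroupCorners} corner by corner, the element $g_{a_i,p_i}$ is precisely the image of the relator word $R_{a_i}$ (read from the point $p_i$) under the assignment $x_j\mapsto\xi(c_j)$, $a\mapsto\alpha$, $b\mapsto\beta$. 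Hence $\xi$ is a proper $(\alpha,\beta)$-colouring, i.e.\ all $g_{a_i,p_i}=e$, if and only if this assignment kills every relator, i.e.\ extends to a homomorphism in $Hom_{\alpha,\beta}(\mathcal{E}(D),G)$. The independence of the base points $p_i$, already noted after the definition, is just the remark that conjugating a relator word does not affect its triviality.

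Next I would invoke the explicit isomorphisms produced in the proof of Theorem~\ref{Theorem:ElectricGroup}. For each Reidemeister move the isomorphism $\varphi\colon\mathcal{E}(D_1)\to\mathcal{E}(D_2)$ constructed there only introduces, eliminates, or substitutes \emph{crossing} generators: in every case ($R_1^{l}$, $R_1^{r}$, $R_2$, $R_3$) the generators $a$ and $b$ go to themselves, as one reads off directly from the eliminations ($x\mapsto e$ for $R_1$, the successive elimination of $y$ then $x$ for $R_2$) and from the substitutions $y_1=a^{-1}x_2^{-1}x_3a$, $y_2=a^{-1}x_2^{-1}x_1a$, $y_3=a^{-1}x_1^{-1}x_3a$ for $R_3$. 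Therefore $\varphi$ restricts to the identity on the subgroup $\langle a,b\rangle$, so precomposition with $\varphi$ carries $Hom_{\alpha,\beta}(\mathcal{E}(D_2),G)$ bijectively onto $Hom_{\alpha,\beta}(\mathcal{E}(D_1),G)$. Chaining such bijections along a sequence of Reidemeister moves connecting $D_1$ to $D_2$ and transporting through the identification of the previous paragraph yields the desired bijection $Col_{\alpha,\beta}(D_1)\leftrightarrow Col_{\alpha,\beta}(D_2)$.

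Finally, to justify the word \emph{natural}, I would unwind these composed maps move by move and exhibit each as an explicit local operation on colourings: for $R_1$ one extends a colouring by assigning $e$ to the new crossing (forced by $R_{\overline a}=b^{-1}x^{-1}b$); for $R_2$ one assigns to the two new crossings the common value forced by $R_{\overline a}$; for $R_3$ the three new colours are given by the formulas above with $x_j\mapsto\xi(c_j)$. In each case one must check that the proper conditions on the modified regions reduce, after these forced substitutions, to the proper conditions on the regions of the other diagram, which is exactly the content of the relator identities already displayed in the proof of Theorem~\ref{Theorem:ElectricGroup}, now read in $G$ instead of in the free group. I expect the only genuine point requiring care to be bookkeeping: confirming that the isomorphisms of Theorem~\ref{Theorem:ElectricGroup} really are the identity on $\langle a,b\rangle$ — so that the bijection holds for each \emph{fixed} pair $(\alpha,\beta)$ and not merely after summing over all pairs — and keeping the boundary orientations of the regions straight so that the words $g_{a_i,p_i}$ and the relators $R_{a_i}$ match on the nose.
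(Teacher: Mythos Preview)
Your proposal is correct and, at its core, uses the same computations as the paper: both arguments work Reidemeister move by move and both lean on the relator identities already displayed in the proof of Theorem~\ref{Theorem:ElectricGroup} to see that the colours on the new crossings are uniquely determined (colour $e$ for $R_1$, a common forced value for the two crossings in $R_2$, and the conjugated products for $R_3$).

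The genuine difference is one of packaging. The paper argues \emph{directly} in $G$: for each move it writes down the proper-colouring equations for the new regions and solves them, obtaining the explicit values $\xi'(c)=e$, $\xi'(c_1)=\xi'(c_2)=g_2\beta^{-1}\alpha^{-1}$, and $t_i=\alpha^{-1}s_j^{-1}s_k\alpha$. You instead first identify $Col_{\alpha,\beta}(D)$ with the fibre $Hom_{\alpha,\beta}(\mathcal{E}(D),G)$ of $Hom(\mathcal{E}(D),G)\to G\times G$ over $(\alpha,\beta)$, and then observe that the explicit isomorphisms of Theorem~\ref{Theorem:ElectricGroup} send $a\mapsto a$, $b\mapsto b$, hence act on these fibres. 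Your route makes the word ``natural'' transparent (the bijection is pullback along a specified isomorphism) and reduces the verification to a single bookkeeping point, which you correctly flag: that the Tietze transformations used for each move never touch $a$ or $b$. The paper's route is more self-contained and yields the local transformation rules on colours without any reference to the electric group, which is what is actually needed later when tracking colourings through the moves in the proof of Theorem~\ref{Theorem:TensorInvariant}. Your final paragraph, unwinding the abstract bijection into these local rules, recovers exactly the paper's statements.
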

\begin{proof}
    The main idea of the proof is similar to the one used in the proof of the theorem \ref{Theorem:ElectricGroup}. We should prove that for any proper $(\alpha, \beta)$-colouring $\xi\in Col_{\alpha, \beta}(D_1)$ of the diagram $D_1$ there is exactly one proper $(\alpha, \beta)$-colouring $\xi'\in Col_{\alpha, \beta}(D_2)$ of the diagram $D_2$, where $D_2$ is obtained from $D_1$ by one of the Reidemeister moves $R_1^l,R_1^r, R_2$ or $R_3$ (Figure \ref{Figure:OrientedReidemeister}). We will use the same notations as in the proof of the theorem \ref{Theorem:ElectricGroup}.

    \emph{$R_1^l$ and $R_1^{r}$ moves}. Let $c\in C(D_2)$ be a crossing that appears after the $R_1^l$ (or $R_1^r$). If $\xi'$ is the proper $(\alpha, \beta)$-colouring of the diagram $D_2$, then $b^{-1}\cdot \xi'(c)^{\pm 1}\cdot b = e$. So $\xi'(c) = e$. This colouring $\xi'$ is a unique colouring which extends the colouring $\xi$ of the diagram $D_1$.

    \emph{$R_2$ move}. Let $c_1, c_2\in C(D_2)$ be new crossings that appear after the move ($c_1$ is positive, $c_2$ is negative). Then $\beta^{-1}\alpha^{-1} \xi'(c_1)^{-1}\cdot \xi'(c_2) \alpha \beta = e$. So $\xi'(c_1) = \xi'(c_2)$. Then $g_{a_1', p_1'} = e$ and $g_{a_2', p_2'} = e$, where $p_i'$ is a starting point on the boundary of the are $a_i'$, $i = 1, 2$.

    Let $p$ be a starting point at the boundary of the area $a_2\in A(D_1)$ near the left arc used in the move. Then $g_{a_2, p} = g_1\cdot g_2 = e$, where the element $g_1\in G$ corresponds to the lower part of the area $a_2$ and $g_2\in G$ corresponds to the upper part. Then there is exactly one value $\xi'(c_1) = \xi'(c_2)$ such that 
    \begin{center}
        $g_1\cdot \xi'(c_1)\alpha \beta = e$ and $g_2 \cdot\beta^{-1}\alpha^{-1}\xi'(c_2)^{-1} = e$.
    \end{center}
    Find that $$\xi'(c_1) = \xi'(c_2) = g_2 \beta^{-1}\alpha^{-1} = g_1^{-1} \beta^{-1} \alpha^{-1}.$$

    \emph{$R_3$ move}. Let $\xi\in Col_{\alpha, \beta}(D_1)$ be a proper $(\alpha, \beta)$-colouring of the diagram $D_1$. Let $\xi(c_i) = s_i$, $i = 1, 2, 3$. Consider the colouring $\xi'$ of the diagram $D_2$, which coincides with $\xi$ at all crossings outside the move area, and $\xi'(c_1') = t_i$, $i = 1, 2, 3$.

    Using calculations within the proof of the theorem \ref{Theorem:ElectricGroup}, find that there is exactly one set of values 
    \begin{center}
        $t_1 = \alpha^{-1}s_2^{-1} s_3\alpha$, $t_2 = \alpha^{-1}s_2^{-1}s_1\alpha$ and $t_3 = \alpha^{-1}s_1^{-1}s_3 \alpha$,
    \end{center}
    such that the colouring $\xi'$ is the proper $(\alpha, \beta)$-colouring of the diagram $D_2$.
\end{proof}

\section{Tensor network invariants}

The main subject of this section is the construction of an invariant for links coloured by a finite group, similar to that of \cite{TYB}. In fact, our construction defines the functor from the category of coloured oriented tangles to the category of modules and linear maps (see \cite{TRM} for the theory of uncoloured oriented tangles).

\subsection{Tensors}

Let $K$ be a ring, and let $V$ be a finitely generated free module over the ring $K$ of rank $N\geqslant 1$. Any linear map $T\colon V^{\otimes p}\to V^{\otimes q}$ can naturally be interpreted as a tensor $\widetilde{T}\colon V^{\otimes p}\otimes (V^{*})^{\otimes q} \to K$ as follows. Fix a basis $\{e_1, \ldots, e_N\}$ of the module $V$ and let $\{e^1, \ldots, e^N\}$ be a dual basis of the dual module $V^{*}$. If the linear map $T$ is defined on the basis by the equation $$T(e_{i_1}\otimes \ldots\otimes e_{i_p}) = \sum\limits_{j_1, \ldots, j_q = 1}^{N}T_{i_1 \ldots i_p}^{j_1 \ldots j_q}e_{j_1}\otimes\ldots \otimes e_{j_q}, $$ then $$\widetilde{T}(e_{i_1}\otimes \ldots\otimes e_{i_p}\otimes e^{j_1}\otimes \ldots\otimes e^{j_q}) = T_{i_1 \ldots i_p}^{j_1 \ldots j_q}$$ for all $i_1, \ldots, i_p, j_1, \ldots, j_q\in\{1, \ldots, N\}$. So we can identify the linear map $T$ with the tensor $\widetilde{T}$ and denote it by the same letter $T$. The values $T_{i_1 \ldots i_p}^{j_1 \ldots j_q}$ are coordinates of the tensor $T$.

\subsubsection{Contractions}

Let $T\colon V^{\otimes p}\otimes (V^{*})^{\otimes q}\to K$ be a tensor, $p, q \geqslant 1$. Then the result of the contraction of $T$ along $s$-th lower index and $r$-th upper index is a tensor $*_{s}^{r}T\colon V^{\otimes p - 1}\otimes (V^{*})^{\otimes q - 1}\to K$ with coordinates $$(*_{s}^{r}T)^{j_1\ldots j_{q - 1}}_{i_1\ldots i_{p - 1}} = \sum\limits_{x = 1}^{N}T^{j_1\ldots j_{r - 1}x j_{r}\ldots j_{q - 1}}_{i_1\ldots i_{s - 1}x i_{s} \ldots i_{p - 1}}.$$ Similarly, we can define the operation of simultaneous contraction along several pairs $(s_1, r_1), \ldots, (s_k, r_k)$ of lower and upper indices. The result is denoted by $*_{s_1}^{r_1} *_{s_2}^{r_2}\ldots *_{s_k}^{r_k}T\colon V^{\otimes p - k}\otimes (V^{*})^{q - k}\to K$ and defined by the coordinates $$(*_{s_1}^{r_1} *_{s_2}^{r_2}\ldots *_{s_k}^{r_k}T)_{i_1\ldots i_{p - k}}^{j_1\ldots j_{q - k}} = \sum\limits_{x_1, \ldots, x_k = 1}^{N}T_{i_1\ldots x_1 \ldots x_2\ldots x_k\ldots i_{p - k}}^{j_1\ldots x_1 \ldots x_2\ldots x_k\ldots j_{q - k}},$$ where each index $x_l$, $l = 1, \ldots, k$, is placed at position $s_l$ at the bottom and $r_l$ at the top.

\subsubsection{Graphical approach}

It's convenient to use a graphical approach similar to that of \cite{KU}. It allows to construct new tensors by doing tensor products and contractions in a visual way. In these \emph{tensor diagrams} each tensor $T\colon V^{\otimes p}\otimes (V^{*})^{\otimes q}\to K$ is drawn as a square block with $p$ input arrows and $q$ output arrows (figure \ref{Figure:TensorDiagram} on the left). Input arrows are ordered counter-clockwise and identified with instances of the module $V$ on the argument of the tensor $T$. Output arrows ordered in clockwise direction and identified with instances of the dual module $V^{*}$ on the argument of $T$.

\begin{figure}[ht]
    \begin{tikzpicture}[baseline={([yshift=-1ex]current bounding box.center)}]
        \node at (0, 0) [square, draw] (t) {$T$};
        \begin{scope}[decoration={
            markings,
            mark=at position 0.75 with {\arrow{>}}}
            ]
            \draw[tensor_diagram, postaction={decorate}] (-0.75, -0.75) -- (t);
            \draw[tensor_diagram, postaction={decorate}] (0.75, -0.75) -- (t);
            \draw[tensor_diagram, postaction={decorate}] (t) -- (-0.75, 0.75);
            \draw[tensor_diagram, postaction={decorate}] (t) -- (0.75, 0.75);
        \end{scope}
        \draw (0, -0.75) node {$\ldots$};
        \draw (0, 0.75) node {$\ldots$};
    \end{tikzpicture}
    \hspace{1.5cm}
    \begin{tikzpicture}[baseline={([yshift=-1ex]current bounding box.center)}]
        \node at (0, 0) [square, draw] (t) {$T$};
        \node at (1, 0) [square, draw] (s) {$S$};
        \begin{scope}[decoration={
            markings,
            mark=at position 0.75 with {\arrow{>}}}
            ]
            \draw[tensor_diagram, postaction={decorate}] (-0.75, -0.75) -- (t);
            \draw[tensor_diagram, postaction={decorate}] (1.75, -0.75) -- (s);
            \draw[tensor_diagram, postaction={decorate}] (t) -- (-0.75, 0.75);
            \draw[tensor_diagram, postaction={decorate}] (s) -- (1.75, 0.75);
        \end{scope}
        \draw (0.5, -0.75) node {\makebox[1.5cm]{\dotfill}};
        \draw (0.5, 0.75) node {\makebox[1.5cm]{\dotfill}};
    \end{tikzpicture}
    \hspace{1.5cm}
    \begin{tikzpicture}[baseline={([yshift=-1ex]current bounding box.center)}]
        \node at (0, 0) [square, draw] (t) {$T$};
        \begin{scope}[decoration={
            markings,
            mark=at position 0.75 with {\arrow{>}}}
            ]
            \draw[tensor_diagram, postaction={decorate}] (-0.65, -0.65) -- (t);
            \draw[tensor_diagram, postaction={decorate}] (0.65, -0.65) -- (t);
            \draw[tensor_diagram, postaction={decorate}] (t) -- (-0.65, 0.65);
            \draw[tensor_diagram, postaction={decorate}] (t) -- (0.65, 0.65);
            \draw[tensor_diagram, postaction={decorate}] (0, 0.36) arc (180:0:0.75) -- (1.5, -0.36) arc(0:-180:0.75);
        \end{scope}
    \end{tikzpicture}
    \caption{\label{Figure:TensorDiagram}Tensor $T$ (on the left), tensor product $T\otimes S$ (on the center) and result of contraction $*_{s}^{r}(T)$ (on the right)}
\end{figure}
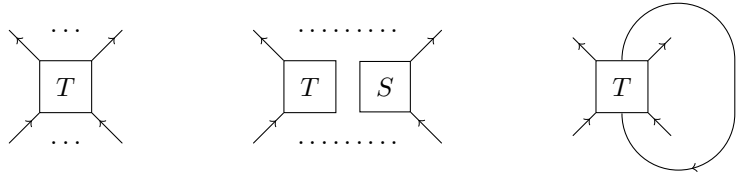

The tensor product of two tensors $T, S$ is just two blocks drawn together (figure \ref{Figure:TensorDiagram} on the center).

To make the contraction of the tensor $T$ along $s$-th lower index and $r$-th upper index, we simply connect the $r$-th output arrow with the $s$-th input arrow on the tensor diagram (figure \ref{Figure:TensorDiagram} on the right). Similarly, the result of simultaneous contraction is obtained by connecting several output arrows with corresponding input arrows.

\subsubsection{Invertible tensors}

The tensor $T\colon V^{\otimes p}\otimes (V^{*})^{\otimes p}\to K$ is called \emph{identity} if its coordinates are $$T_{i_1 \ldots i_p}^{j_1\ldots j_q} = \delta_{i_1}^{j_1}\cdot \ldots\cdot \delta_{i_p}^{j_p},$$ where $\delta_{i}^{j}$ is the Kronecker symbol ($\delta_{i}^{j} = 1$ if $i = j$, and $\delta_{i}^{j} = 0$ if $i \neq j$). It's clear that the identity tensor is just the identity map $id\colon V^{\otimes p}\to V^{\otimes p}$.

The tensor $T\colon V^{\otimes p}\otimes (V^{*})^{\otimes p}\to K$ is called \emph{invertible}, if there exists a tensor $\overline{T}\colon V^{\otimes p}\otimes (V^{*})^{\otimes p}\to K$ such that $$*^{p + 1}_{1}\ldots *^{2p}_{p}(T\otimes \overline{T}) = *^{p + 1}_{1}\ldots *^{2p}_{p}(\overline{T}\otimes T) = id,$$ where $id\colon V^{\otimes p}\otimes (V^{*})^{\otimes p}\to K$ is the identity tensor. The tensor $\overline{T}$ is called the \emph{inverse} of the tensor $T$. It's clear that if we consider the tensor $T$ as a map $T\colon V^{\otimes p}\to V^{\otimes p}$, then the inverse tensor $\overline{T}$ is just the inverse map $T^{-1}$.

\subsection{Consistent tensor $G$-systems}

Let $G$ be a finite group, $\alpha, \beta\in G$ be fixed elements. Consider three families of invertible tensors:
\begin{enumerate}
    \item $\mathcal{Y} = \{Y_g\colon V\otimes V\otimes V^{*}\otimes V^{*}\to K | g\in G\}$;
    \item $\mathcal{L} = \{L_{g_1, g_2}\colon V\otimes V^{*}\to K | g_1, g_2\in G\}$;
    \item $\mathcal{R} = \{R_{g_1, g_2}\colon V\otimes V^{*}\to K | g_1, g_2\in G\}$.
\end{enumerate}

For each $x\in G$ denote $\Gamma_x = *_{1}^{2}(R_{x, x}\otimes L_{x, x})$ and $\overline{\Gamma_x} = *_{1}^{2}(\overline{L_{x, x}}\otimes \overline{R_{x, x}})$. Diagrams of the tensors $\Gamma_x$ and $\overline{\Gamma_x}$ are shown in the figure \ref{Figure:GammaDiagrams}. It's clear that $\overline{\Gamma_x}$ is the inverse of $\Gamma_x$.

\begin{figure}[ht]
    \begin{tikzpicture}[baseline={([yshift=-0.5ex]current bounding box.center)}]
        \node (0, 0) [diamond, draw] (g) {$\Gamma_x$};
        \begin{scope}[decoration={
            markings,
            mark=at position 0.75 with {\arrow{>}}}
            ]
            \draw[tensor_diagram, postaction={decorate}] (-1.0, 0.0) -- (g);
            \draw[tensor_diagram, postaction={decorate}] (g) -- (1.0, 0.0);
        \end{scope}
    \end{tikzpicture}
    $=$
    \begin{tikzpicture}[baseline={([yshift=-0.5ex]current bounding box.center)}]
        \node at (0, 0) [square, draw, inner sep=-1pt] (r) {$L_{x, x}$};
        \node at (1.5, 0) [square, draw, inner sep=-1pt] (l) {$R_{x, x}$};
        \begin{scope}[decoration={
            markings,
            mark=at position 0.75 with {\arrow{>}}}
            ]
            \draw[tensor_diagram, postaction={decorate}] (r) -- (l);
            \draw[tensor_diagram, postaction={decorate}] (-0.75, 0.0) -- (r);
            \draw[tensor_diagram, postaction={decorate}] (l) -- (2.25, 0.0);
        \end{scope}
    \end{tikzpicture}
    \hspace{1.2cm}
    \begin{tikzpicture}[baseline={([yshift=-0.5ex]current bounding box.center)}]
        \node (0, 0) [diamond, draw] (g) {$\overline{\Gamma_x}$};
        \begin{scope}[decoration={
            markings,
            mark=at position 0.75 with {\arrow{>}}}
            ]
            \draw[tensor_diagram, postaction={decorate}] (-1.0, 0.0) -- (g);
            \draw[tensor_diagram, postaction={decorate}] (g) -- (1.0, 0.0);
        \end{scope}
    \end{tikzpicture}
    $=$
    \begin{tikzpicture}[baseline={([yshift=-0.5ex]current bounding box.center)}]
        \node at (0, 0) [square, draw, inner sep=-1pt] (r) {$\overline{R_{x, x}}$};
        \node at (1.5, 0) [square, draw, inner sep=-1pt] (l) {$\overline{L_{x, x}}$};
        \begin{scope}[decoration={
            markings,
            mark=at position 0.75 with {\arrow{>}}}
            ]
            \draw[tensor_diagram, postaction={decorate}] (r) -- (l);
            \draw[tensor_diagram, postaction={decorate}] (-0.75, 0.0) -- (r);
            \draw[tensor_diagram, postaction={decorate}] (l) -- (2.25, 0.0);
        \end{scope}
    \end{tikzpicture}
    \caption{\label{Figure:GammaDiagrams}Tensors $\Gamma_x$ and $\overline{\Gamma_x}$}
\end{figure}

Let $A, B\in K$ be invertible ring elements. We call the tuple $\mathcal{S} = (\mathcal{Y}, \mathcal{L}, \mathcal{R}, A, B)$ a \emph{consistent tensor $G$-system} if it satisfies the following conditions:
\begin{enumerate}
    \item For neutral element $e\in G$:
    \begin{enumerate}
        \item $*_{3}^{2} *_{2}^{3}(Y_{e}\otimes \overline{\Gamma_e}) = A B^{-1} \cdot id$;
        \item $*_{3}^{2} *_{2}^{3}(\overline{Y_{e}}\otimes \overline{\Gamma_e}) = A B \cdot id$;
        \item $*_{3}^{1} *_{1}^{3}(Y_{e}\otimes \Gamma_e) = A^{-1} B^{-1} \cdot id$;
        \item $*_{3}^{1} *_{1}^{3}(\overline{Y_{e}}\otimes \Gamma_e) = A^{-1} B \cdot id$.
    \end{enumerate}
    \item For all $x, y, z\in G$: $*_{3}^{1} *_{5}^{2} *_{4}^{5} (Y_x\otimes Y_z\otimes Y_y) = *_{5}^{2} *_{2}^{3} *_{6}^{4} (Y_{b}\otimes Y_{a}\otimes Y_{c})$, where $a = y \alpha x^{-1}\alpha ^{-1}$, $b = xz$ and $c = y \alpha z \alpha^{-1}$;
    \item For all $x, y_1, y_2\in G$:
    \begin{enumerate}
        \item $*_{3}^{1} *_{7}^{2} *_{5}^{3} *_{1}^{5} *_{4}^{6} (Y_x\otimes \overline{Y_x}\otimes \Gamma_x\otimes \overline{L_{y_1, x}}\otimes \overline{R_{x, y_2}}) = id$;
        \item $*_{7}^{1} *_{4}^{2} *_{5}^{4} *_{2}^{5} *_{3}^{6} (\overline{Y_x}\otimes Y_x\otimes \overline{\Gamma_x}\otimes R_{y_1, x}\otimes L_{x, y_2}) = id$;
    \end{enumerate}
    \item For all $x, y_1, y_2, y_3, y_4\in G$:
    \begin{enumerate}
        \item $*_{3}^{1} *_{4}^{2} *_{5}^{3} *_{6}^{4} (\overline{L_{y_4, x}}\otimes \overline{L_{y_3, x}}\otimes Y_{x}\otimes L_{x, y_2}\otimes L_{x y_1}) = *_{3}^{1} *_{4}^{2} *_{5}^{3} *_{6}^{4} (R_{y_4, x}\otimes R_{y_3, x}\otimes Y_{x}\otimes \overline{R_{x, y_2}}\otimes \overline{R_{x, y_1}})$;
        \item $*_{3}^{1} *_{4}^{2} *_{5}^{3} *_{6}^{4} (\overline{L_{y_4, x}}\otimes \overline{L_{y_3, x}}\otimes \overline{Y_{x}}\otimes L_{x, y_2}\otimes L_{x y_1}) = *_{3}^{1} *_{4}^{2} *_{5}^{3} *_{6}^{4} (R_{y_4, x}\otimes R_{y_3, x}\otimes \overline{Y_{x}}\otimes \overline{R_{x, y_2}}\otimes \overline{R_{x, y_1}})$.
    \end{enumerate}
\end{enumerate}

Axioms for consistent tensor $G$-systems look quite cumbersome, but they are natural in the sense of link invariants. Diagrams of axioms 1a -- 1d are shown on the figure \ref{Figure:Axioms1}, diagram of axiom 2 is shown on the figure \ref{Figure:Axioms2}, diagrams of axioms 3a -- 3b are shown on the figure \ref{Figure:Axioms3} and diagrams of axioms 4a -- 4b are shown on the figure \ref{Figure:Axioms4}.

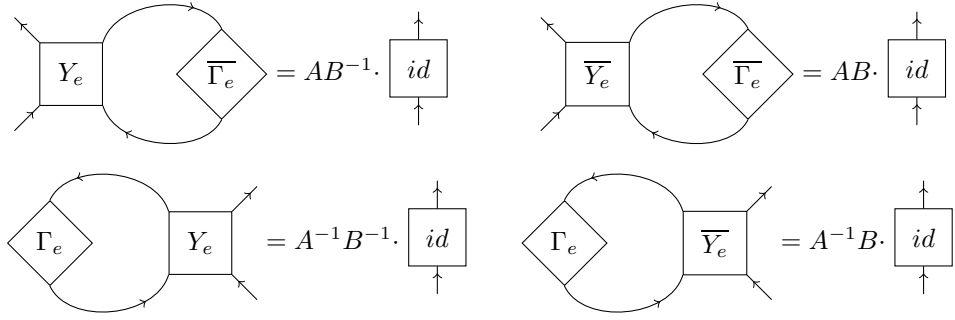
\begin{figure}[ht]
    \begin{tikzpicture}[baseline={([yshift=-0.5ex]current bounding box.center)}]
        \node at (0.0, 0.0) [square, draw] (y) {$Y_{e}$};
        \node at (2.0, 0.0) [diamond, draw] (g) {$\overline{\Gamma_e}$};
        \begin{scope}[decoration={
            markings,
            mark=at position 0.75 with {\arrow{>}}}
            ]
            \draw[tensor_diagram, postaction={decorate}] (y.north east) .. controls (0.6, 1.0) and (1.75, 1.1) .. (g.north);
            \draw[tensor_diagram, postaction={decorate}] (g.south) .. controls (1.75, -1.1) and (0.6, -1.0) .. (y.south east);
            \draw[tensor_diagram, postaction={decorate}] (y) -- (-0.75, 0.75);
            \draw[tensor_diagram, postaction={decorate}] (-0.75, -0.75) -- (y);
        \end{scope}
    \end{tikzpicture}
    $=AB^{-1}\cdot$
    \begin{tikzpicture}[baseline={([yshift=-1.0ex]current bounding box.center)}]
        \node at (0, 0) [square, draw] (s) {$id$};
        \begin{scope}[decoration={
            markings,
            mark=at position 0.75 with {\arrow{>}}}
            ]
            \draw[tensor_diagram, postaction={decorate}] (0.0, -0.75) -- (s);
            \draw[tensor_diagram, postaction={decorate}] (s) -- (0.0, 0.75);
        \end{scope}
    \end{tikzpicture}
    \hspace{1cm}
    \begin{tikzpicture}[baseline={([yshift=-0.5ex]current bounding box.center)}]
        \node at (0.0, 0.0) [square, draw] (y) {$\overline{Y_{e}}$};
        \node at (2.0, 0.0) [diamond, draw] (g) {$\overline{\Gamma_e}$};
        \begin{scope}[decoration={
            markings,
            mark=at position 0.75 with {\arrow{>}}}
            ]
            \draw[tensor_diagram, postaction={decorate}] (y.north east) .. controls (0.6, 1.0) and (1.75, 1.1) .. (g.north);
            \draw[tensor_diagram, postaction={decorate}] (g.south) .. controls (1.75, -1.1) and (0.6, -1.0) .. (y.south east);
            \draw[tensor_diagram, postaction={decorate}] (y) -- (-0.75, 0.75);
            \draw[tensor_diagram, postaction={decorate}] (-0.75, -0.75) -- (y);
        \end{scope}
    \end{tikzpicture}
    $=AB\cdot$
    \begin{tikzpicture}[baseline={([yshift=-1.0ex]current bounding box.center)}]
        \node at (0, 0) [square, draw] (s) {$id$};
        \begin{scope}[decoration={
            markings,
            mark=at position 0.75 with {\arrow{>}}}
            ]
            \draw[tensor_diagram, postaction={decorate}] (0.0, -0.75) -- (s);
            \draw[tensor_diagram, postaction={decorate}] (s) -- (0.0, 0.75);
        \end{scope}
    \end{tikzpicture}

    \begin{tikzpicture}[baseline={([yshift=-0.5ex]current bounding box.center)}]
        \node at (0.0, 0.0) [square, draw] (y) {$Y_{e}$};
        \node at (-2.0, 0.0) [diamond, draw] (g) {$\Gamma_e$};
        \begin{scope}[decoration={
            markings,
            mark=at position 0.75 with {\arrow{>}}}
            ]
            \draw[tensor_diagram, postaction={decorate}] (y.north west) .. controls (-0.6, 1.0) and (-1.75, 1.1) .. (g.north);
            \draw[tensor_diagram, postaction={decorate}] (g.south) .. controls (-1.75, -1.1) and (-0.6, -1.0) .. (y.south west);
            \draw[tensor_diagram, postaction={decorate}] (y) -- (0.75, 0.75);
            \draw[tensor_diagram, postaction={decorate}] (0.75, -0.75) -- (y);
        \end{scope}
    \end{tikzpicture}
    $=A^{-1}B^{-1}\cdot$
    \begin{tikzpicture}[baseline={([yshift=-1.0ex]current bounding box.center)}]
        \node at (0, 0) [square, draw] (s) {$id$};
        \begin{scope}[decoration={
            markings,
            mark=at position 0.75 with {\arrow{>}}}
            ]
            \draw[tensor_diagram, postaction={decorate}] (0.0, -0.75) -- (s);
            \draw[tensor_diagram, postaction={decorate}] (s) -- (0.0, 0.75);
        \end{scope}
    \end{tikzpicture}
    \hspace{0.5cm}
    \begin{tikzpicture}[baseline={([yshift=-0.5ex]current bounding box.center)}]
        \node at (0.0, 0.0) [square, draw] (y) {$\overline{Y_{e}}$};
        \node at (-2.0, 0.0) [diamond, draw] (g) {$\Gamma_e$};
        \begin{scope}[decoration={
            markings,
            mark=at position 0.75 with {\arrow{>}}}
            ]
            \draw[tensor_diagram, postaction={decorate}] (y.north west) .. controls (-0.6, 1.0) and (-1.75, 1.1) .. (g.north);
            \draw[tensor_diagram, postaction={decorate}] (g.south) .. controls (-1.75, -1.1) and (-0.6, -1.0) .. (y.south west);
            \draw[tensor_diagram, postaction={decorate}] (y) -- (0.75, 0.75);
            \draw[tensor_diagram, postaction={decorate}] (0.75, -0.75) -- (y);
        \end{scope}
    \end{tikzpicture}
    $=A^{-1}B\cdot$
    \begin{tikzpicture}[baseline={([yshift=-1.0ex]current bounding box.center)}]
        \node at (0, 0) [square, draw] (s) {$id$};
        \begin{scope}[decoration={
            markings,
            mark=at position 0.75 with {\arrow{>}}}
            ]
            \draw[tensor_diagram, postaction={decorate}] (0.0, -0.75) -- (s);
            \draw[tensor_diagram, postaction={decorate}] (s) -- (0.0, 0.75);
        \end{scope}
    \end{tikzpicture}
    \caption{\label{Figure:Axioms1}Tensor diagrams for the axioms 1a -- 1d}
\end{figure}

\begin{figure}[ht]
    \begin{tikzpicture}[baseline={([yshift=-1.0ex]current bounding box.center)}]
        \node at (0.0, 0.0) [square, draw] (x) {$Y_x$};
        \node at (1.25, 1.25) [square, draw] (y) {$Y_y$};
        \node at (0.0, 2.5) [square, draw] (z) {$Y_z$};
        \begin{scope}[decoration={
            markings,
            mark=at position 0.75 with {\arrow{>}}}
            ]
            \draw[tensor_diagram, postaction={decorate}] (-0.75, -0.75) -- (x.south west);
            \draw[tensor_diagram, postaction={decorate}] (0.75, -0.75) -- (x.south east);
            \draw[tensor_diagram, postaction={decorate}] (x.north west) .. controls (-0.75, 0.75) and (-0.75, 1.75) .. (z.south west);
            \draw[tensor_diagram, postaction={decorate}] (x.north east) -- (y.south west);
            \draw[tensor_diagram, postaction={decorate}] (y.north west) -- (z.south east);
            \draw[tensor_diagram, postaction={decorate}] (z.north west) -- (-0.75, 3.25);
            \draw[tensor_diagram, postaction={decorate}] (z.north east) -- (0.75, 3.25);
            \draw[tensor_diagram, postaction={decorate}] (2.0, -0.75) ..controls (1.75, 0.0) .. (y.south east);
            \draw[tensor_diagram, postaction={decorate}] (y.north east) ..controls (1.75, 2.5) .. (2.0, 3.25);
        \end{scope}
    \end{tikzpicture}
    $\ =\ $
    \begin{tikzpicture}[baseline={([yshift=-1.0ex]current bounding box.center)}]
        \node at (1.25, 0.0) [square, draw] (a) {$Y_a$};
        \node at (0.0, 1.25) [square, draw] (b) {$Y_b$};
        \node at (1.25, 2.5) [square, draw] (c) {$Y_c$};
        \begin{scope}[decoration={
            markings,
            mark=at position 0.75 with {\arrow{>}}}
            ]
            \draw[tensor_diagram, postaction={decorate}] (-0.75, -0.75) .. controls (-0.5, 0.0) .. (b.south west);
            \draw[tensor_diagram, postaction={decorate}] (b.north west) .. controls (-0.5, 2.75) .. (-0.75, 3.25);
            \draw[tensor_diagram, postaction={decorate}] (b.north east) -- (c.south west);
            \draw[tensor_diagram, postaction={decorate}] (a.north west) -- (b.south east);
            \draw[tensor_diagram, postaction={decorate}] (0.5, -0.75) -- (a.south west);
            \draw[tensor_diagram, postaction={decorate}] (2.0, -0.75) -- (a.south east);
            \draw[tensor_diagram, postaction={decorate}] (c.north west) -- (0.5, 3.25);
            \draw[tensor_diagram, postaction={decorate}] (c.north east) -- (2.0, 3.25);
            \draw[tensor_diagram, postaction={decorate}] (a.north east) .. controls (2.1, 0.75) and (2.1, 1.75) .. (c.south east);
        \end{scope}
    \end{tikzpicture}
    \caption{\label{Figure:Axioms2}Tensor diagram for the axiom 2}
\end{figure}
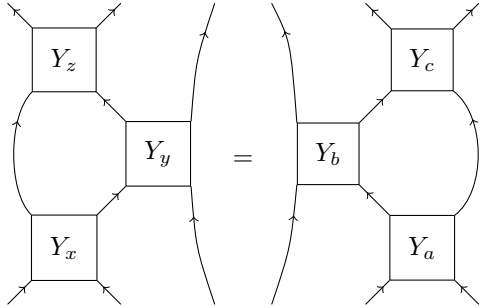

\begin{figure}[ht]
    \begin{tikzpicture}[baseline={([yshift=-0.5ex]current bounding box.center)}]
        \node at (0.0, 0.0) [diamond, draw] (g) {$\Gamma_x$};
        \node at (1.5, -1.5) [square, draw] (x) {$Y_x$};
        \node at (1.5, 1.5) [square, draw] (y) {$\overline{Y_x}$};
        \node at (3.0, -0.75) [square, draw, inner sep=-2pt] (r) {$\overline{R_{x, y_2}}$};
        \node at (3.0, 0.75) [square, draw, inner sep=-2pt] (l) {$\overline{L_{y_1, x}}$};
        \begin{scope}[decoration={
            markings,
            mark=at position 0.75 with {\arrow{>}}}
            ]
            \draw[tensor_diagram, postaction={decorate}] (x.north west) .. controls (0.75, -0.25) and (0.75, 0.25) .. (y.south west);
            \draw[tensor_diagram, postaction={decorate}] (x.north east) .. controls (2.25, -0.75) .. (r.west);
            \draw[tensor_diagram, postaction={decorate}] (l.west) .. controls (2.25, 0.75) .. (y.south east);
            \draw[tensor_diagram, postaction={decorate}] (y.north west) .. controls (0.75, 2.5) and (-0.25, 2.0) .. (g.north);
            \draw[tensor_diagram, postaction={decorate}] (g.south) .. controls (-0.25, -2.0) and (0.75, -2.5) .. (x.south west);
            \draw[tensor_diagram, postaction={decorate}] (2.25, -2.25) .. controls (2.15, -2.05) .. (x.south east);
            \draw[tensor_diagram, postaction={decorate}] (y.north east) .. controls (2.15, 2.05) .. (2.25, 2.25);
            \draw[tensor_diagram, postaction={decorate}] (3.75, 2.25) .. controls (4.0, 2.0) and (4.0, 1.0) .. (l.east);
            \draw[tensor_diagram, postaction={decorate}] (r.east) .. controls (4.0, -1.0) and (4.0, -2.0) .. (3.75, -2.25);
        \end{scope}
    \end{tikzpicture}
    $=$
    \begin{tikzpicture}[baseline={([yshift=-1.0ex]current bounding box.center)}]
        \node at (0.0, 0.0) [square, draw] (e) {$id$};
        \begin{scope}[decoration={
            markings,
            mark=at position 0.75 with {\arrow{>}}}
            ]
            \draw[tensor_diagram, postaction={decorate}] (-0.5, -0.75) -- (e.south west);
            \draw[tensor_diagram, postaction={decorate}] (e.north west) -- (-0.5, 0.75);
            \draw[tensor_diagram, postaction={decorate}] (e.south east) -- (0.5, -0.75);
            \draw[tensor_diagram, postaction={decorate}] (0.5, 0.75) -- (e.north east);
        \end{scope}
    \end{tikzpicture}
    \hfill
    \begin{tikzpicture}[baseline={([yshift=-0.5ex]current bounding box.center)}]
        \node at (1.5, 0.0) [diamond, draw] (g) {$\overline{\Gamma_x}$};
        \node at (0, -1.5) [square, draw] (x) {$\overline{Y_x}$};
        \node at (0, 1.5) [square, draw] (y) {$Y_x$};
        \node at (-1.5, 0.75) [square, draw, inner sep=-2pt] (r) {$R_{y_1, x}$};
        \node at (-1.5, -0.75) [square, draw, inner sep=-2pt] (l) {$L_{x, y_2}$};
        \begin{scope}[decoration={
            markings,
            mark=at position 0.75 with {\arrow{>}}}
            ]
            \draw[tensor_diagram, postaction={decorate}] (x.north east) .. controls (0.75, -0.25) and (0.75, 0.25) .. (y.south east);
            \draw[tensor_diagram, postaction={decorate}] (x.north west) .. controls (-0.75, -0.75) .. (l.east);
            \draw[tensor_diagram, postaction={decorate}] (r.east) .. controls (-0.75, 0.75) .. (y.south west);
            \draw[tensor_diagram, postaction={decorate}] (y.north east) .. controls (0.75, 2.5) and (1.75, 2.0) .. (g.north);
            \draw[tensor_diagram, postaction={decorate}] (g.south) .. controls (1.75, -2.0) and (0.75, -2.5) .. (x.south east);
            \draw[tensor_diagram, postaction={decorate}] (-0.75, -2.25) .. controls (-0.65, -2.05) .. (x.south west);
            \draw[tensor_diagram, postaction={decorate}] (y.north west) .. controls (-0.65, 2.05) .. (-0.75, 2.25);
            \draw[tensor_diagram, postaction={decorate}] (-2.25, 2.25) .. controls (-2.5, 2.0) and (-2.5, 1.0) .. (r.west);
            \draw[tensor_diagram, postaction={decorate}] (l.west) .. controls (-2.5, -1.0) and (-2.5, -2.0) .. (-2.25, -2.25);
        \end{scope}
    \end{tikzpicture}
    $=$
    \begin{tikzpicture}[baseline={([yshift=-1.0ex]current bounding box.center)}]
        \node at (0.0, 0.0) [square, draw] (e) {$id$};
        \begin{scope}[decoration={
            markings,
            mark=at position 0.75 with {\arrow{>}}}
            ]
            \draw[tensor_diagram, postaction={decorate}] (e.south west) -- (-0.5, -0.75);
            \draw[tensor_diagram, postaction={decorate}] (-0.5, 0.75) -- (e.north west);
            \draw[tensor_diagram, postaction={decorate}] (0.5, -0.75) -- (e.south east);
            \draw[tensor_diagram, postaction={decorate}] (e.north east) -- (0.5, 0.75);
        \end{scope}
    \end{tikzpicture}
    \caption{\label{Figure:Axioms3}Tensor diagrams for the axioms 3a and 3b}
\end{figure}
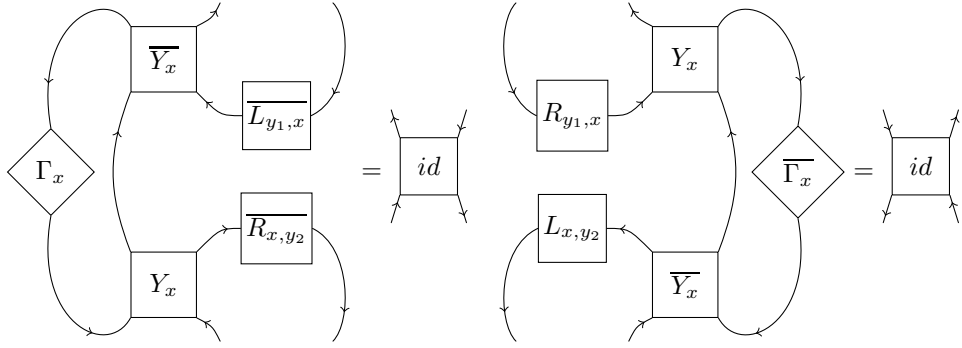

\begin{figure}[ht]
    \begin{tikzpicture}[baseline={([yshift=-0.5ex]current bounding box.center)}]
        \node at (0.0, 0.0) [square, draw] (x) {$Y_x$};
        \node at (-2.5, 0.0) [square, draw, inner sep=-2.5pt] (l1) {$L_{x, y_1}$};
        \node at (-1.25, 0.0) [square, draw, inner sep=-2.5pt] (l2) {$L_{x, y_2}$};        
        \node at (1.25, 0.0) [square, draw, inner sep=-2.5pt] (l3) {$\overline{L_{y_3, x}}$};
        \node at (2.5, 0.0) [square, draw, inner sep=-2.5pt] (l4) {$\overline{L_{y_4, x}}$};
        \begin{scope}[decoration={
            markings,
            mark=at position 0.75 with {\arrow{>}}}
            ]
            \draw[tensor_diagram, postaction={decorate}] (x.north west) .. controls (-0.5, 1.0) and (-1.25, 1.0) .. (l2.north);
            \draw[tensor_diagram, postaction={decorate}] (x.north east) .. controls (0.5, 1.5) and (-2.5, 1.5) .. (l1.north);
            \draw[tensor_diagram, postaction={decorate}] (l4.south) .. controls (2.5, -1.5) and (-0.5, -1.5) .. (x.south west);
            \draw[tensor_diagram, postaction={decorate}] (l3.south) .. controls (1.25, -1.0) and (0.5, -1.0) .. (x.south east);
            \draw[tensor_diagram, postaction={decorate}] (l1.south) -- (-2.5, -1.0);
            \draw[tensor_diagram, postaction={decorate}] (l2.south) -- (-1.25, -1.0);
            \draw[tensor_diagram, postaction={decorate}] (1.25, 1.0) -- (l3.north);
            \draw[tensor_diagram, postaction={decorate}] (2.5, 1.0) -- (l4.north);
        \end{scope}
    \end{tikzpicture}
    $=$
    \begin{tikzpicture}[baseline={([yshift=-0.5ex]current bounding box.center)}]
        \node at (0.0, 0.0) [square, draw] (x) {$Y_x$};
        \node at (-2.5, 0.0) [square, draw, inner sep=-2.5pt] (r1) {$R_{y_3, x}$};
        \node at (-1.25, 0.0) [square, draw, inner sep=-2.5pt] (r2) {$R_{y_4, x}$};
        \node at (1.25, 0.0) [square, draw, inner sep=-2.5pt] (r3) {$\overline{R_{x, y_1}}$};
        \node at (2.5, 0.0) [square, draw, inner sep=-2.5pt] (r4) {$\overline{R_{x, y_2}}$};
        \begin{scope}[decoration={
            markings,
            mark=at position 0.75 with {\arrow{>}}}
            ]
            \draw[tensor_diagram, postaction={decorate}] (x.north east) .. controls (0.5, 1.0) and (1.25, 1.0) .. (r3.north);
            \draw[tensor_diagram, postaction={decorate}] (x.north west) .. controls (-0.5, 1.5) and (2.5, 1.5) .. (r4.north);
            \draw[tensor_diagram, postaction={decorate}] (r2.south) .. controls (-1.25, -1.0) and (-0.5, -1.0) .. (x.south west);
            \draw[tensor_diagram, postaction={decorate}] (r1.south) .. controls (-2.5, -1.5) and (0.5, -1.5) .. (x.south east);
            \draw[tensor_diagram, postaction={decorate}] (r3.south) -- (1.25, -1.0);
            \draw[tensor_diagram, postaction={decorate}] (r4.south) -- (2.5, -1.0);
            \draw[tensor_diagram, postaction={decorate}] (-2.5, 1.0) -- (r1.north);
            \draw[tensor_diagram, postaction={decorate}] (-1.25, 1.0) -- (r2.north);
        \end{scope}
    \end{tikzpicture}

    \begin{tikzpicture}[baseline={([yshift=-0.5ex]current bounding box.center)}]
        \node at (0.0, 0.0) [square, draw] (x) {$\overline{Y_x}$};
        \node at (-2.5, 0.0) [square, draw, inner sep=-2.5pt] (l1) {$L_{x, y_1}$};
        \node at (-1.25, 0.0) [square, draw, inner sep=-2.5pt] (l2) {$L_{x, y_2}$};        
        \node at (1.25, 0.0) [square, draw, inner sep=-2.5pt] (l3) {$\overline{L_{y_3, x}}$};
        \node at (2.5, 0.0) [square, draw, inner sep=-2.5pt] (l4) {$\overline{L_{y_4, x}}$};
        \begin{scope}[decoration={
            markings,
            mark=at position 0.75 with {\arrow{>}}}
            ]
            \draw[tensor_diagram, postaction={decorate}] (x.north west) .. controls (-0.5, 1.0) and (-1.25, 1.0) .. (l2.north);
            \draw[tensor_diagram, postaction={decorate}] (x.north east) .. controls (0.5, 1.5) and (-2.5, 1.5) .. (l1.north);
            \draw[tensor_diagram, postaction={decorate}] (l4.south) .. controls (2.5, -1.5) and (-0.5, -1.5) .. (x.south west);
            \draw[tensor_diagram, postaction={decorate}] (l3.south) .. controls (1.25, -1.0) and (0.5, -1.0) .. (x.south east);
            \draw[tensor_diagram, postaction={decorate}] (l1.south) -- (-2.5, -1.0);
            \draw[tensor_diagram, postaction={decorate}] (l2.south) -- (-1.25, -1.0);
            \draw[tensor_diagram, postaction={decorate}] (1.25, 1.0) -- (l3.north);
            \draw[tensor_diagram, postaction={decorate}] (2.5, 1.0) -- (l4.north);
        \end{scope}
    \end{tikzpicture}
    $=$
    \begin{tikzpicture}[baseline={([yshift=-0.5ex]current bounding box.center)}]
        \node at (0.0, 0.0) [square, draw] (x) {$\overline{Y_x}$};
        \node at (-2.5, 0.0) [square, draw, inner sep=-2.5pt] (r1) {$R_{y_3, x}$};
        \node at (-1.25, 0.0) [square, draw, inner sep=-2.5pt] (r2) {$R_{y_4, x}$};
        \node at (1.25, 0.0) [square, draw, inner sep=-2.5pt] (r3) {$\overline{R_{x, y_1}}$};
        \node at (2.5, 0.0) [square, draw, inner sep=-2.5pt] (r4) {$\overline{R_{x, y_2}}$};
        \begin{scope}[decoration={
            markings,
            mark=at position 0.75 with {\arrow{>}}}
            ]
            \draw[tensor_diagram, postaction={decorate}] (x.north east) .. controls (0.5, 1.0) and (1.25, 1.0) .. (r3.north);
            \draw[tensor_diagram, postaction={decorate}] (x.north west) .. controls (-0.5, 1.5) and (2.5, 1.5) .. (r4.north);
            \draw[tensor_diagram, postaction={decorate}] (r2.south) .. controls (-1.25, -1.0) and (-0.5, -1.0) .. (x.south west);
            \draw[tensor_diagram, postaction={decorate}] (r1.south) .. controls (-2.5, -1.5) and (0.5, -1.5) .. (x.south east);
            \draw[tensor_diagram, postaction={decorate}] (r3.south) -- (1.25, -1.0);
            \draw[tensor_diagram, postaction={decorate}] (r4.south) -- (2.5, -1.0);
            \draw[tensor_diagram, postaction={decorate}] (-2.5, 1.0) -- (r1.north);
            \draw[tensor_diagram, postaction={decorate}] (-1.25, 1.0) -- (r2.north);
        \end{scope}
    \end{tikzpicture}
    \caption{\label{Figure:Axioms4}Tensor diagrams for the axioms 4a -- 4b}
\end{figure}
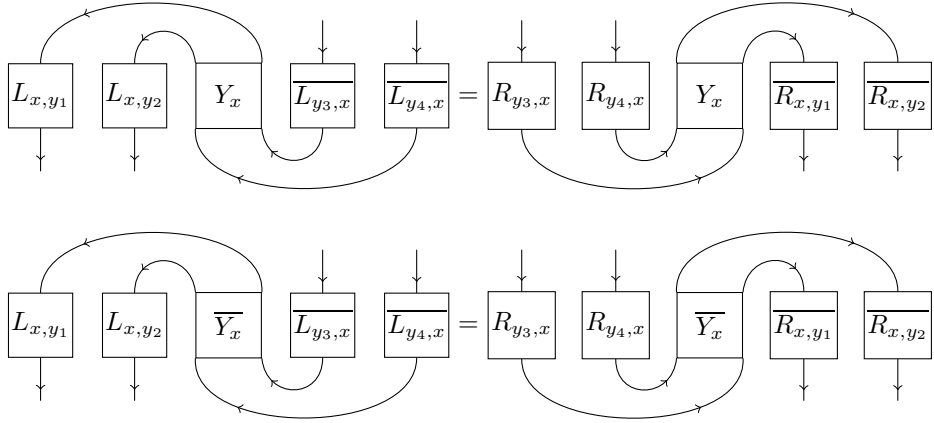

\subsection{Tensor networks for link diagrams}

Let $G$ be a finite group, $\alpha, \beta\in G$, and let $\mathcal{S} = (\mathcal{Y}, \mathcal{L}, \mathcal{R}, A, B)$ be the consistent tensor $G$-system. Next, let $D$ be a diagram of the link $K$ on the 2-sphere $S^2$, and let $\xi\colon C(D)\to G$ be a proper $(\alpha, \beta)$-colouring of the diagram $G$. 

Construct a \emph{tensor network} $T_{D, \xi}$ as follows. Choose a vertical direction on the 2-sphere $S^2$, and use planar isotopes to align the neighbourhoods of all crossings along that direction. This means that the projection of any diagram arc in the neighbourhood of a crossing to that direction should coincide with it. Next, replace each crossing $c\in C(D)$, coloured by the colour $g\in G$, with either $Y_{g}$ (if $c$ is positive) or $\overline{Y_{g}}$ (if $c$ is negative), and each local extrema of the diagram with either $L_{g_1, g_2}$ or $\overline{L_{g_1, g_2}}$ or $R_{g_1, g_2}$ or $\overline{R_{g_1, g_2}}$, where $g_1$ is the colour of the crossing at the beginning of the arc and $g_2$ is the colour of the crossing at the end of the arc (see figure \ref{Figure:ExtremaToTensors}). Finally, connect all these tensors by remaining arcs of the diagram.

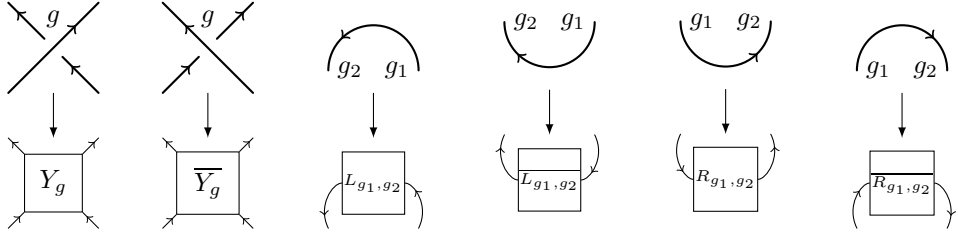
\begin{figure}[ht]
    \begin{tikzpicture}[scale=0.6]
        \begin{scope}[decoration={
            markings,
            mark=at position 0.75 with {\arrow{>}}}
            ]
            \draw[knot_diagram, postaction={decorate}] (-1, -1) -- (1, 1);
            \draw[knot_diagram, postaction={decorate}] (1, -1) -- (0.2, -0.2);
            \draw[knot_diagram, postaction={decorate}] (-0.2, 0.2) -- (-1, 1);
        \end{scope}
        \draw (0.0, 0.25) node[above] {$g$};
        \draw[-latex] (0, -1) -- (0, -2);

        \node at (0, -3) [square, draw, inner sep=2.5pt] (y) {$Y_g$};
        \begin{scope}[decoration={
            markings,
            mark=at position 0.75 with {\arrow{>}}}
            ]
            \draw[tensor_diagram, postaction={decorate}] (-1.0, -4.0) -- (y);
            \draw[tensor_diagram, postaction={decorate}] (1.0, -4.0) -- (y);
            \draw[tensor_diagram, postaction={decorate}] (y) -- (-1.0, -2.0);
            \draw[tensor_diagram, postaction={decorate}] (y) -- (1.0, -2.0);
        \end{scope}
    \end{tikzpicture}
    \hfill
    \begin{tikzpicture}[scale=0.6]
        \begin{scope}[decoration={
            markings,
            mark=at position 0.75 with {\arrow{>}}}
            ]
            \draw[knot_diagram, postaction={decorate}] (1, -1) -- (-1, 1);
            \draw[knot_diagram, postaction={decorate}] (-1, -1) -- (-0.2, -0.2);
            \draw[knot_diagram, postaction={decorate}] (0.2, 0.2) -- (1, 1);
        \end{scope}
        \draw (0.0, 0.25) node[above] {$g$};
        \draw[-latex] (0, -1) -- (0, -2);

        \node at (0, -3) [square, draw, inner sep=2.5pt] (y) {$\overline{Y_g}$};
        \begin{scope}[decoration={
            markings,
            mark=at position 0.75 with {\arrow{>}}}
            ]
            \draw[tensor_diagram, postaction={decorate}] (-1.0, -4.0) -- (y);
            \draw[tensor_diagram, postaction={decorate}] (1.0, -4.0) -- (y);
            \draw[tensor_diagram, postaction={decorate}] (y) -- (-1.0, -2.0);
            \draw[tensor_diagram, postaction={decorate}] (y) -- (1.0, -2.0);
        \end{scope}
    \end{tikzpicture}
    \hfill
    \begin{tikzpicture}[scale=0.6]
        \begin{scope}[decoration={
            markings,
            mark=at position 0.75 with {\arrow{>}}}
            ]
            \draw[knot_diagram, postaction={decorate}] (1, -0.5) arc (0:180:1);
        \end{scope}
        \draw (-1, -0.5) node[right] {$g_2$};
        \draw (1, -0.5) node[left] {$g_1$};
        \draw[-latex] (0, -1) -- (0, -2);

        \node at (0, -3) [square, draw, inner sep=-3pt] (y) {{\tiny$L_{g_1, g_2}$}};
        \begin{scope}[decoration={
            markings,
            mark=at position 0.75 with {\arrow{>}}}
            ]
            \draw[tensor_diagram, postaction={decorate}] (y.west) .. controls (-0.9, -3.0) and (-1.25, -3.5) .. (-1.0, -4.0);
            \draw[tensor_diagram, postaction={decorate}] (1.0, -4.0) .. controls (1.25, -3.5) and (0.9, -3.0) .. (y.east);
        \end{scope}
    \end{tikzpicture}
    \hfill
    \begin{tikzpicture}[scale=0.6, baseline={([yshift=-1.5ex]current bounding box.south)}]
        \begin{scope}[decoration={
            markings,
            mark=at position 0.75 with {\arrow{>}}}
            ]
            \draw[knot_diagram, postaction={decorate}] (1, 0.5) arc (0:-180:1);
        \end{scope}
        \draw (-1, 0.5) node[right] {$g_2$};
        \draw (1, 0.5) node[left] {$g_1$};
        \draw[-latex] (0, -1.0) -- (0, -2.0);

        \node at (0, -3) [square, draw, inner sep=-3pt] (y) {{\tiny$\overline{L_{g_1, g_2}}$}};
        \begin{scope}[decoration={
            markings,
            mark=at position 0.75 with {\arrow{>}}}
            ]
            \draw[tensor_diagram, postaction={decorate}] (y.west) .. controls (-0.9, -3.0) and (-1.25, -2.5) .. (-1.0, -2.0);
            \draw[tensor_diagram, postaction={decorate}] (1.0, -2.0) .. controls (1.25, -2.5) and (0.9, -3.0) .. (y.east);
        \end{scope}
    \end{tikzpicture}
    \hfill
    \begin{tikzpicture}[scale=0.6, baseline={([yshift=-1.5ex]current bounding box.south)}]
        \begin{scope}[decoration={
            markings,
            mark=at position 0.75 with {\arrow{>}}}
            ]
            \draw[knot_diagram, postaction={decorate}] (-1, 0.5) arc (180:360:1);
        \end{scope}
        \draw (-1, 0.5) node[right] {$g_1$};
        \draw (1, 0.5) node[left] {$g_2$};
        \draw[-latex] (0, -1.0) -- (0, -2.0);

        \node at (0, -3) [square, draw, inner sep=-3pt] (y) {{\tiny$R_{g_1, g_2}$}};
        \begin{scope}[decoration={
            markings,
            mark=at position 0.75 with {\arrow{>}}}
            ]
            \draw[tensor_diagram, postaction={decorate}] (-1.0, -2.0) .. controls (-1.25, -2.5) and (-0.9, -3.0) .. (y.west);
            \draw[tensor_diagram, postaction={decorate}] (y.east) .. controls (0.9, -3.0) and (1.25, -2.5) .. (1.0, -2.0);
        \end{scope}
    \end{tikzpicture}
    \hfill
    \begin{tikzpicture}[scale=0.6]
        \begin{scope}[decoration={
            markings,
            mark=at position 0.75 with {\arrow{>}}}
            ]
            \draw[knot_diagram, postaction={decorate}] (-1, -0.5) arc (180:0:1);
        \end{scope}
        \draw (-1, -0.5) node[right] {$g_1$};
        \draw (1, -0.5) node[left] {$g_2$};
        \draw[-latex] (0, -1) -- (0, -2);

        \node at (0, -3) [square, draw, inner sep=-3pt] (y) {{\tiny$\overline{R_{g_1, g_2}}$}};
        \begin{scope}[decoration={
            markings,
            mark=at position 0.75 with {\arrow{>}}}
            ]
            \draw[tensor_diagram, postaction={decorate}] (-1.0, -4.0) .. controls (-1.25, -3.5) and (-0.9, -3.0) .. (y.west);
            \draw[tensor_diagram, postaction={decorate}] (y.east) .. controls (0.9, -3.0) and (1.25, -3.5) .. (1.0, -4.0);
        \end{scope}
    \end{tikzpicture}
    \caption{\label{Figure:ExtremaToTensors}Crossings and local extrema of the diagram and corresponding tensors}
\end{figure}

\begin{example}
    Let $D$ be the diagram of the <<positive trefoil>> knot shown in the figure \ref{Figure:TrefoilTensor} on the left. Consider a proper $(e, e)$-colouring $\xi\colon V(D)\to G$, mapping each crossing of $D$ to the same element $x\in G$ such that $x^3 = e$. Corresponding tensor network $T_{D, \xi}$ shown in the same figure \ref{Figure:TrefoilTensor} on the right.

    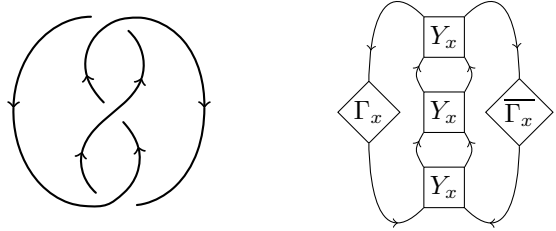
\begin{figure}[ht]
        \begin{tikzpicture}[baseline=(current bounding box.center)]
            \draw[knot_diagram] (2.444, 28.542).. controls (2.498, 28.707) and (2.468, 28.895) .. (2.374, 29.041).. controls (2.344, 29.087) and (2.308, 29.129) .. (2.268, 29.166);
            \draw[knot_diagram, ->] (1.653, 27.561).. controls (1.678, 27.665) and (1.735, 27.759) .. (1.805, 27.84).. controls (1.843, 27.884) and (1.907, 27.945) .. (1.928, 27.964).. controls (1.95, 27.984) and (2.198, 28.194) .. (2.209, 28.205).. controls (2.221, 28.215) and (2.255, 28.249) .. (2.277, 28.272).. controls (2.349, 28.35) and (2.411, 28.44) .. (2.444, 28.542);
            \draw[knot_diagram, ->] (1.834, 27.012).. controls (1.764, 27.075) and (1.709, 27.154) .. (1.675, 27.241).. controls (1.636, 27.342) and (1.627, 27.455) .. (1.653, 27.561);
            \draw[knot_diagram] (3.277, 28.138).. controls (3.276, 27.942) and (3.249, 27.745) .. (3.183, 27.561).. controls (3.117, 27.376) and (3.011, 27.205) .. (2.865, 27.074).. controls (2.756, 26.978) and (2.626, 26.904) .. (2.486, 26.868).. controls (2.451, 26.859) and (2.416, 26.853) .. (2.38, 26.849);
            \draw[knot_diagram, ->] (1.711, 28.583).. controls (1.671, 28.735) and (1.701, 28.902) .. (1.784, 29.035).. controls (1.815, 29.085) and (1.853, 29.132) .. (1.898, 29.17).. controls (2.055, 29.306) and (2.177, 29.345) .. (2.409, 29.342).. controls (2.478, 29.341) and (2.548, 29.324) .. (2.614, 29.301).. controls (2.763, 29.25) and (2.895, 29.156) .. (2.997, 29.037).. controls (3.205, 28.793) and (3.279, 28.459) .. (3.277, 28.138);
            \draw[knot_diagram, ->] (1.939, 28.214).. controls (1.906, 28.248) and (1.875, 28.283) .. (1.846, 28.321).. controls (1.786, 28.399) and (1.735, 28.487) .. (1.711, 28.583);
            \draw[knot_diagram] (2.395, 27.627).. controls (2.368, 27.722) and (2.316, 27.809) .. (2.254, 27.887).. controls (2.236, 27.909) and (2.218, 27.931) .. (2.199, 27.952);
            \draw[knot_diagram, ->] (0.739, 28.137).. controls (0.737, 28.071) and (0.739, 28.005) .. (0.746, 27.94).. controls (0.776, 27.639) and (0.891, 27.342) .. (1.1, 27.124).. controls (1.206, 27.014) and (1.335, 26.925) .. (1.48, 26.876).. controls (1.543, 26.855) and (1.609, 26.842) .. (1.676, 26.837).. controls (1.966, 26.813) and (2.028, 26.847) .. (2.213, 27.017).. controls (2.287, 27.085) and (2.347, 27.173) .. (2.381, 27.269).. controls (2.422, 27.383) and (2.428, 27.51) .. (2.395, 27.627);
            \draw[knot_diagram, ->] (1.769, 29.354).. controls (1.659, 29.354) and (1.548, 29.331) .. (1.446, 29.289).. controls (1.285, 29.224) and (1.144, 29.113) .. (1.034, 28.979).. controls (0.842, 28.744) and (0.746, 28.441) .. (0.739, 28.137);
        \end{tikzpicture}
        \hspace{1.5cm}
        \begin{tikzpicture}[baseline=(current bounding box.center)]
            \node at (0, 0) [draw, square, inner sep=0pt] (y2) {$Y_x$};
            \node at (0, 1) [draw, square, inner sep=0pt] (y3) {$Y_x$};
            \node at (0, -1) [draw, square, inner sep=0pt] (y1) {$Y_x$};
            \node at (-1, 0) [draw, diamond, inner sep=1pt] (gl) {$\Gamma_x$};
            \node at (1, 0) [draw, diamond, inner sep=1pt] (gr) {$\overline{\Gamma_x}$};
            \begin{scope}[decoration={
                markings,
                mark=at position 0.75 with {\arrow{>}}}
                ]
                \draw[tensor_diagram, postaction={decorate}] (y1.north west) .. controls (-0.4, -0.5) .. (y2.south west);
                \draw[tensor_diagram, postaction={decorate}] (y1.north east) .. controls (0.4, -0.5) .. (y2.south east);

                \draw[tensor_diagram, postaction={decorate}] (y2.north west) .. controls (-0.4, 0.5) .. (y3.south west);
                \draw[tensor_diagram, postaction={decorate}] (y2.north east) .. controls (0.4, 0.5) .. (y3.south east);

                \draw[tensor_diagram, postaction={decorate}] (y3.north west) .. controls (-1.0, 2) and (-1.0, 0.5) .. (gl.north);
                \draw[tensor_diagram, postaction={decorate}] (y3.north east) .. controls (1.0, 2) and (1.0, 0.5) .. (gr.north);

                \draw[tensor_diagram, postaction={decorate}] (gl.south) .. controls (-1.0, -0.5) and (-1.0, -2) .. (y1.south west);
                \draw[tensor_diagram, postaction={decorate}] (gr.south) .. controls (1.0, -0.5) and (1.0, -2) .. (y1.south east);
            \end{scope}
        \end{tikzpicture}
        \caption{\label{Figure:TrefoilTensor}Diagram of the <<positive trefoil>> knot (on the left) and corresponding tensor network (on the right)}
    \end{figure}
\end{example}

Note that the tensor diagram $T_{D, \xi}$ has no input and output arrows. So it can be identified with an element of the ring $K$.

Define the value $$t_{\mathcal{S}}(D, \xi) = A^{r(D)}\cdot B^{w(D)}\cdot T_{D, \xi},$$ where $r(D)$ is a total rotation of the tangent vector of $D$ (counted with respect to the orientation of $S^2$), and $w(D)$ is the difference of the number of positive and negative crossings of $D$.

\begin{theorem}
    \label{Theorem:TensorInvariant}
    Let $G$ be a finite group, $\alpha, \beta\in G$, $\mathcal{S} = (\mathcal{Y}, \mathcal{L}, \mathcal{R}, A, B)$ be a consistent tensor $G$-system. Let $D$ be a diagram of an oriented link, $\xi\colon C(D)\to G$ a proper $(\alpha, \beta)$-colouring of $D$. If $D'$ is another diagram of the same link and $\xi'\colon C(D')\to G$ is a correspondence proper $(\alpha, \beta)$-colouring, then $t_{\mathcal{S}}(D, \xi) = t_{\mathcal{S}}(D', \xi')$.
\end{theorem}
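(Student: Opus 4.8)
The plan is to mimic the structure of the proofs of Theorems~\ref{Theorem:ElectricGroup} and~\ref{Theorem:ColouringsBijection}, tracking the tensor network in place of the group presentation or the colouring. First note that, once a vertical direction on $S^2$ is fixed and $D$ is placed in generic Morse position, $T_{D,\xi}$ is a well-defined element of $K$: it is the total contraction of the tensor product of the local tensors attached to crossings and local extrema, and since contractions along disjoint pairs of indices commute and the tensor product is associative, the result does not depend on the order in which the contractions are carried out. Hence $t_{\mathcal S}(D,\xi)$ depends only on $(D,\xi)$ and the chosen Morse structure. Now invoke the presentation of the category of coloured oriented tangles by elementary generators (cups, caps, positive and negative crossings) and relations, following \cite{TRM}: two Morse diagrams of the same oriented link are connected by a finite sequence of (i) planar isotopies preserving the Morse structure together with far-commutativity of elementary pieces of disjoint vertical support; (ii) the ``snake'' straightening moves and the birth/death of a cancelling maximum--minimum pair; (iii) the moves commuting a crossing past a local maximum or minimum (equivalently, rotating a crossing by $\pi$); (iv) the oriented Reidemeister moves $R_1^l, R_1^r, R_2, R_3$ of Figure~\ref{Figure:OrientedReidemeister}. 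Throughout, $\xi'$ is the unique proper $(\alpha,\beta)$-colouring of $D'$ supplied by Theorem~\ref{Theorem:ColouringsBijection} (which reduces to $\xi$ on crossings untouched by the move, while each local extremum on an arc carries the pair of colours of the two crossings bounding that arc). Among all these moves only $R_1$ changes $r(D)$ or $w(D)$, all the others being local regular homotopies of the underlying immersed curve that leave the Whitney index fixed and that either do not involve crossings or change the writhe by a net zero; so for every move except $R_1$ it is enough to prove $T_{D,\xi}=T_{D',\xi'}$, while for $R_1$ the prefactor $A^{r(D)}B^{w(D)}$ must exactly absorb the change.

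Moves of type (i) leave the tensor network literally unchanged. For a snake move the maximum and minimum being straightened lie on a single arc, hence carry a common colour pair, and the local picture is $L_{g_1,g_2}$ against $\overline{L_{g_1,g_2}}$, or $R_{g_1,g_2}$ against $\overline{R_{g_1,g_2}}$, or $\Gamma_x$ against $\overline{\Gamma_x}$; each contracts to $id$ by the invertibility hypotheses on $\mathcal L$, $\mathcal R$ and on $\Gamma_x$ (the last as remarked after Figure~\ref{Figure:GammaDiagrams}), and the Whitney index is unchanged; a birth/death of an extrema pair is the same identity read backwards. For the crossing-past-extremum moves of type (iii) the local network is exactly the one of axioms 4a--4b (or its mirror, obtained by barring every tensor) once the indices $y_1,y_2,y_3,y_4$ are read off as the colours of the crossings adjacent to the extrema involved; again $r(D)$ and $w(D)$ do not change.

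For $R_1^r$ and $R_1^l$: by Theorem~\ref{Theorem:ColouringsBijection} the new crossing is coloured by the neutral element $e$, so locally the network is $Y_e$ or $\overline{Y_e}$ (according to the sign of the kink) contracted against $\Gamma_e$ or $\overline{\Gamma_e}$ (according to the handedness of the kink) in precisely the pattern of one of axioms 1a--1d; applying that axiom replaces the kink by one of the scalars $AB^{-1}$, $AB$, $A^{-1}B^{-1}$, $A^{-1}B$ times $id$. A kink changes $(r(D),w(D))$ by $(\pm1,\pm1)$, and in each of the four cases the scalar produced is the reciprocal of the corresponding change in $A^{r(D)}B^{w(D)}$, so $t_{\mathcal S}$ is preserved. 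For $R_2$: by Theorem~\ref{Theorem:ColouringsBijection} the two new crossings receive a common colour $x$; after isotoping them into vertical position the local network is $Y_x$ and $\overline{Y_x}$ joined through a $\Gamma_x$ (or $\overline{\Gamma_x}$) and one $L$- and one $R$-type tensor whose extra indices are the neighbouring crossing colours, and this collapses to $id$ by axioms 3a--3b. For $R_3$: by Theorem~\ref{Theorem:ColouringsBijection} the triple of crossing colours changes from $(s_1,s_2,s_3)$ to $(t_1,t_2,t_3)=(\alpha^{-1}s_2^{-1}s_3\alpha,\ \alpha^{-1}s_2^{-1}s_1\alpha,\ \alpha^{-1}s_1^{-1}s_3\alpha)$; matching the three $Y$-blocks on the two sides of Figure~\ref{Figure:R3Move} with the two sides of axiom 2 and specialising $a=y\alpha x^{-1}\alpha^{-1}$, $b=xz$, $c=y\alpha z\alpha^{-1}$ to these values gives $T_{D,\xi}=T_{D',\xi'}$ for the particular orientation and over/under pattern of Figure~\ref{Figure:OrientedReidemeister}, and every other variant of the oriented $R_3$ move follows from this one together with the already-established $R_2$-invariance.

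The difficulty is entirely in the bookkeeping. One must enumerate the orientation and over/under variants of each move, put each variant into the exact local form of one of the axioms --- which is why axioms 1--4 come in several sub-cases and why both $\mathcal L$ and $\mathcal R$, together with their inverses, are needed --- and check in each case that the colours forced by Theorem~\ref{Theorem:ColouringsBijection} on crossings and on extrema are precisely the colour arguments demanded by the axiom; in particular, that the $R_3$ colour change is exactly the substitution $a,b,c$ of axiom 2, and that the extrema colour data on the arcs involved in the moves of type (iii) and in $R_2$ fit axioms 3 and 4. The remaining, purely numerical, point is to confirm the Whitney-index and writhe accounting for the four $R_1$ configurations, so that the prefactor $A^{r(D)}B^{w(D)}$ compensates the scalars $AB^{-1}, AB, A^{-1}B^{-1}, A^{-1}B$ of axioms 1a--1d. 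None of this is deep, but it is where the work lies.
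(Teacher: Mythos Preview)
Your strategy is the paper's—check invariance under a generating set of local moves on Morse diagrams—but you organise the moves differently and in one place you attach the wrong axiom to a move.

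The paper works with the explicit list $W_1^{l,r}$, $W_2^{l,r}$, $R_2^{l,r}$, $R_1^{\pm l,\pm r}$, $R_3$, $\Omega_1$, $\Omega_2$, $T_1$, $T_2$ taken from \cite{TRM} and pairs each with its axiom: invertibility of $L_{g_1,g_2}$, $R_{g_1,g_2}$, $Y_g$ disposes of the $W$'s and of the co-oriented $R_2^{l,r}$; axioms 1a--1d handle the four $R_1$'s; axiom~2 handles $R_3$; axioms 3a--3b handle the \emph{anti-oriented} second Reidemeister moves $\Omega_1,\Omega_2$; axioms 4a--4b handle the crossing-rotation moves $T_1,T_2$.

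Your $R_2$ paragraph is the gap. The $R_2$ of Figure~\ref{Figure:OrientedReidemeister} has both strands oriented upward; in Morse position the two new crossings stack directly as $\overline{Y_x}$ over $Y_x$, with no $\Gamma_x$ and no $L$- or $R$-tensor between them, and they cancel by invertibility of $Y_x$ alone. The configuration you describe—$Y_x$ and $\overline{Y_x}$ joined through a $\Gamma_x$ together with one $L$- and one $R$-type tensor—is exactly the anti-oriented second move $\Omega$ (one strand up, one down), which is what axioms 3a--3b are for and which does not appear in your list (i)--(iv). If you keep Polyak's minimal set for (iv) and correct the $R_2$ argument to ``invertibility of $Y_x$'', your proof never invokes axiom~3; that is acceptable only if you either enlarge the move list by $\Omega_1,\Omega_2$ (as the paper does) and use axiom~3 there, or else run Polyak's derivation of the anti-oriented $R_2$ from $R_1,R_2,R_3$ while carrying the colouring of Theorem~\ref{Theorem:ColouringsBijection} through each step, thereby showing axiom~3 is a formal consequence of axioms 1, 2, 4 and invertibility. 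You do neither.
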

\begin{proof}
    To compute $t_{\mathcal{S}}(D, \xi)$ we required that the crossings of the diagram $D$ are aligned along a fixed vertical direction. It's known (see \cite{TRM}) that any two diagrams of the same oriented link with such aligned crossings can be connected by a finite sequence of the following local moves $W_1^r, W_1^l, W_2^r, W_2^l, R_2^l, R_2^r$, $R_1^{+r}, R_1^{-r}, R_1^{+l}, R_1^{-l}$, $R_3$, $\Omega_1, \Omega_2$, $T_1$ and $T_2$ (figures \ref{Figure:LocalOrientedMovesOne}, \ref{Figure:LocalOrientedMovesTwo}, \ref{Figure:LocalOrientedMovesThree}). This is not a minimal set of moves required to transform any diagram into any other. But we will consider this extended set of moves for simplicity.

    \begin{figure}[ht]
        \begin{tikzpicture}[scale=0.85, baseline={([yshift=-1.0ex]current bounding box.center)}]
            \draw[knot_diagram] (1.304, 28.594).. controls (1.276, 28.873) and (1.284, 29.156) .. (1.327, 29.434);
            \draw[knot_diagram, ->] (0.33, 28.519).. controls (0.329, 28.619) and (0.342, 28.72) .. (0.384, 28.81).. controls (0.426, 28.9) and (0.5, 28.978) .. (0.594, 29.01).. controls (0.64, 29.027) and (0.692, 29.032) .. (0.74, 29.022).. controls (0.789, 29.013) and (0.835, 28.989) .. (0.869, 28.953).. controls (0.901, 28.92) and (0.922, 28.878) .. (0.932, 28.833).. controls (0.943, 28.789) and (0.944, 28.743) .. (0.938, 28.697).. controls (0.925, 28.607) and (0.888, 28.522) .. (0.853, 28.438).. controls (0.818, 28.354) and (0.786, 28.266) .. (0.785, 28.175).. controls (0.785, 28.129) and (0.793, 28.083) .. (0.811, 28.042).. controls (0.83, 28.0) and (0.86, 27.963) .. (0.898, 27.939).. controls (0.933, 27.917) and (0.973, 27.907) .. (1.014, 27.906).. controls (1.054, 27.906) and (1.094, 27.917) .. (1.13, 27.935).. controls (1.202, 27.972) and (1.256, 28.039) .. (1.286, 28.114).. controls (1.317, 28.189) and (1.325, 28.271) .. (1.324, 28.352).. controls (1.323, 28.433) and (1.312, 28.513) .. (1.304, 28.594);
            \draw[knot_diagram, ->] (0.365, 27.661).. controls (0.376, 27.848) and (0.372, 28.036) .. (0.354, 28.223).. controls (0.344, 28.322) and (0.33, 28.42) .. (0.33, 28.519);
        \end{tikzpicture}
        $\xleftarrow{W_1^r}$
        \begin{tikzpicture}[scale=0.7, baseline={([yshift=-1.0ex]current bounding box.center)}]
            \begin{scope}[decoration={
                markings,
                mark=at position 0.75 with {\arrow{>}}}
                ]
                \draw[knot_diagram, postaction={decorate}] (0, -1) -- (0, 1);
            \end{scope}
        \end{tikzpicture}
        $\xrightarrow{W_1^l}$
        \begin{tikzpicture}[scale=0.85, baseline={([yshift=-1.0ex]current bounding box.center)}]
            \draw[knot_diagram] (2.265, 28.599).. controls (2.297, 28.878) and (2.307, 29.159) .. (2.294, 29.44);
            \draw[knot_diagram, ->] (3.31, 28.459).. controls (3.326, 28.541) and (3.335, 28.624) .. (3.323, 28.706).. controls (3.312, 28.788) and (3.278, 28.869) .. (3.218, 28.926).. controls (3.188, 28.955) and (3.152, 28.977) .. (3.112, 28.989).. controls (3.073, 29.001) and (3.03, 29.003) .. (2.99, 28.993).. controls (2.941, 28.981) and (2.897, 28.951) .. (2.864, 28.913).. controls (2.832, 28.874) and (2.81, 28.827) .. (2.797, 28.779).. controls (2.772, 28.681) and (2.782, 28.578) .. (2.793, 28.478).. controls (2.803, 28.377) and (2.813, 28.275) .. (2.788, 28.177).. controls (2.775, 28.128) and (2.753, 28.081) .. (2.721, 28.043).. controls (2.688, 28.004) and (2.645, 27.974) .. (2.596, 27.962).. controls (2.561, 27.953) and (2.525, 27.954) .. (2.49, 27.962).. controls (2.456, 27.969) and (2.423, 27.985) .. (2.394, 28.006).. controls (2.337, 28.048) and (2.296, 28.11) .. (2.274, 28.177).. controls (2.228, 28.312) and (2.249, 28.458) .. (2.265, 28.599);
            \draw[knot_diagram, ->] (3.256, 27.667).. controls (3.221, 27.848) and (3.221, 28.036) .. (3.256, 28.217).. controls (3.272, 28.298) and (3.294, 28.378) .. (3.31, 28.459);
        \end{tikzpicture}
        \hfill
        \begin{tikzpicture}[scale=0.85, baseline={([yshift=-1.0ex]current bounding box.center)}]
            \draw[knot_diagram, <-] (1.304, 28.594).. controls (1.276, 28.873) and (1.284, 29.156) .. (1.327, 29.434);
            \draw[knot_diagram, <-] (0.33, 28.519).. controls (0.329, 28.619) and (0.342, 28.72) .. (0.384, 28.81).. controls (0.426, 28.9) and (0.5, 28.978) .. (0.594, 29.01).. controls (0.64, 29.027) and (0.692, 29.032) .. (0.74, 29.022).. controls (0.789, 29.013) and (0.835, 28.989) .. (0.869, 28.953).. controls (0.901, 28.92) and (0.922, 28.878) .. (0.932, 28.833).. controls (0.943, 28.789) and (0.944, 28.743) .. (0.938, 28.697).. controls (0.925, 28.607) and (0.888, 28.522) .. (0.853, 28.438).. controls (0.818, 28.354) and (0.786, 28.266) .. (0.785, 28.175).. controls (0.785, 28.129) and (0.793, 28.083) .. (0.811, 28.042).. controls (0.83, 28.0) and (0.86, 27.963) .. (0.898, 27.939).. controls (0.933, 27.917) and (0.973, 27.907) .. (1.014, 27.906).. controls (1.054, 27.906) and (1.094, 27.917) .. (1.13, 27.935).. controls (1.202, 27.972) and (1.256, 28.039) .. (1.286, 28.114).. controls (1.317, 28.189) and (1.325, 28.271) .. (1.324, 28.352).. controls (1.323, 28.433) and (1.312, 28.513) .. (1.304, 28.594);
            \draw[knot_diagram] (0.365, 27.661).. controls (0.376, 27.848) and (0.372, 28.036) .. (0.354, 28.223).. controls (0.344, 28.322) and (0.33, 28.42) .. (0.33, 28.519);
        \end{tikzpicture}
        $\xleftarrow{W_2^l}$
        \begin{tikzpicture}[scale=0.7, baseline={([yshift=-1.0ex]current bounding box.center)}]
            \begin{scope}[decoration={
                markings,
                mark=at position 0.75 with {\arrow{>}}}
                ]
                \draw[knot_diagram, postaction={decorate}] (0, 1) -- (0, -1);
            \end{scope}
        \end{tikzpicture}
        $\xrightarrow{W_2^r}$
        \begin{tikzpicture}[scale=0.85, baseline={([yshift=-1.0ex]current bounding box.center)}]
            \draw[knot_diagram, <-] (2.265, 28.599).. controls (2.297, 28.878) and (2.307, 29.159) .. (2.294, 29.44);
            \draw[knot_diagram, <-] (3.31, 28.459).. controls (3.326, 28.541) and (3.335, 28.624) .. (3.323, 28.706).. controls (3.312, 28.788) and (3.278, 28.869) .. (3.218, 28.926).. controls (3.188, 28.955) and (3.152, 28.977) .. (3.112, 28.989).. controls (3.073, 29.001) and (3.03, 29.003) .. (2.99, 28.993).. controls (2.941, 28.981) and (2.897, 28.951) .. (2.864, 28.913).. controls (2.832, 28.874) and (2.81, 28.827) .. (2.797, 28.779).. controls (2.772, 28.681) and (2.782, 28.578) .. (2.793, 28.478).. controls (2.803, 28.377) and (2.813, 28.275) .. (2.788, 28.177).. controls (2.775, 28.128) and (2.753, 28.081) .. (2.721, 28.043).. controls (2.688, 28.004) and (2.645, 27.974) .. (2.596, 27.962).. controls (2.561, 27.953) and (2.525, 27.954) .. (2.49, 27.962).. controls (2.456, 27.969) and (2.423, 27.985) .. (2.394, 28.006).. controls (2.337, 28.048) and (2.296, 28.11) .. (2.274, 28.177).. controls (2.228, 28.312) and (2.249, 28.458) .. (2.265, 28.599);
            \draw[knot_diagram] (3.256, 27.667).. controls (3.221, 27.848) and (3.221, 28.036) .. (3.256, 28.217).. controls (3.272, 28.298) and (3.294, 28.378) .. (3.31, 28.459);
        \end{tikzpicture}
        \hfill
        \begin{tikzpicture}[scale=0.85, baseline={([yshift=-1.0ex]current bounding box.center)}]
            \draw[knot_diagram] (5.813, 28.455).. controls (5.803, 28.707) and (5.906, 28.962) .. (6.088, 29.136).. controls (6.27, 29.311) and (6.529, 29.403) .. (6.78, 29.382);
            \draw[knot_diagram, ->] (6.794, 27.6).. controls (6.552, 27.576) and (6.302, 27.656) .. (6.119, 27.816).. controls (5.936, 27.976) and (5.822, 28.212) .. (5.813, 28.455);
            \draw[knot_diagram] (6.07, 29.271).. controls (5.916, 29.356) and (5.737, 29.396) .. (5.561, 29.382);
            \draw[knot_diagram] (6.529, 28.455).. controls (6.539, 28.707) and (6.436, 28.962) .. (6.254, 29.136);
            \draw[knot_diagram, ->] (6.246, 27.837).. controls (6.415, 27.996) and (6.52, 28.223) .. (6.529, 28.455);
            \draw[knot_diagram] (5.548, 27.6).. controls (5.739, 27.581) and (5.936, 27.627) .. (6.1, 27.727);
        \end{tikzpicture}
        $\xleftarrow{R_2^l}$
        \begin{tikzpicture}[scale=0.85, baseline={([yshift=-1.0ex]current bounding box.center)}]
            \draw[knot_diagram] (4.257, 28.993).. controls (4.212, 29.131) and (4.148, 29.262) .. (4.066, 29.382);
            \draw[knot_diagram, ->] (4.071, 27.627).. controls (4.257, 27.904) and (4.351, 28.242) .. (4.335, 28.576).. controls (4.328, 28.718) and (4.302, 28.859) .. (4.257, 28.993);
            \draw[knot_diagram] (4.803, 28.99).. controls (4.849, 29.129) and (4.913, 29.261) .. (4.995, 29.382);
            \draw[knot_diagram, ->] (4.991, 27.627).. controls (4.804, 27.904) and (4.71, 28.242) .. (4.726, 28.576).. controls (4.733, 28.717) and (4.759, 28.856) .. (4.803, 28.99);
        \end{tikzpicture}
        $\xrightarrow{R_2^r}$
        \begin{tikzpicture}[scale=0.85, baseline={([yshift=-1.0ex]current bounding box.center)}]
            \draw[knot_diagram] (8.086, 29.264).. controls (8.243, 29.354) and (8.427, 29.397) .. (8.607, 29.382);
            \draw[knot_diagram] (7.64, 28.455).. controls (7.63, 28.707) and (7.733, 28.962) .. (7.915, 29.136);
            \draw[knot_diagram, ->] (7.914, 27.845).. controls (7.75, 28.004) and (7.649, 28.227) .. (7.64, 28.455);
            \draw[knot_diagram] (8.621, 27.6).. controls (8.442, 27.582) and (8.258, 27.622) .. (8.1, 27.708);
            \draw[knot_diagram] (8.356, 28.455).. controls (8.366, 28.707) and (8.263, 28.962) .. (8.081, 29.136).. controls (7.899, 29.311) and (7.64, 29.403) .. (7.389, 29.382);
            \draw[knot_diagram, ->] (7.375, 27.6).. controls (7.617, 27.576) and (7.867, 27.656) .. (8.05, 27.816).. controls (8.233, 27.976) and (8.346, 28.212) .. (8.356, 28.455);
        \end{tikzpicture}
        \caption{\label{Figure:LocalOrientedMovesOne}Moves $W_1^r, W_1^l, W_2^l, W_2^r, R_2^l$ and $R_2^r$}
    \end{figure}
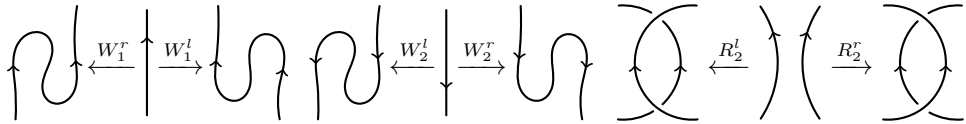

    \begin{figure}[ht]
        \begin{tikzpicture}[scale=0.85, baseline={([yshift=-1.0ex]current bounding box.center)}]
            \draw[knot_diagram] (11.908, 28.436).. controls (11.918, 28.343) and (11.967, 28.255) .. (12.04, 28.197).. controls (12.113, 28.139) and (12.209, 28.112) .. (12.302, 28.123).. controls (12.362, 28.13) and (12.421, 28.154) .. (12.472, 28.187).. controls (12.523, 28.221) and (12.567, 28.265) .. (12.603, 28.314).. controls (12.675, 28.413) and (12.717, 28.531) .. (12.748, 28.649).. controls (12.809, 28.874) and (12.836, 29.109) .. (12.831, 29.342);
            \draw[knot_diagram, ->] (12.635, 28.606).. controls (12.594, 28.667) and (12.543, 28.721) .. (12.481, 28.761).. controls (12.406, 28.809) and (12.315, 28.835) .. (12.226, 28.826).. controls (12.133, 28.817) and (12.045, 28.769) .. (11.986, 28.697).. controls (11.927, 28.625) and (11.898, 28.529) .. (11.908, 28.436);
            \draw[knot_diagram] (12.786, 28.047).. controls (12.778, 28.145) and (12.768, 28.244) .. (12.745, 28.339);
            \draw[knot_diagram, ->] (12.812, 27.506).. controls (12.81, 27.686) and (12.801, 27.867) .. (12.786, 28.047);
        \end{tikzpicture}
        $\xleftarrow{R_1^{+l}}$
        \begin{tikzpicture}[scale=0.75, baseline={([yshift=-1.0ex]current bounding box.center)}]
            \begin{scope}[decoration={
                markings,
                mark=at position 0.75 with {\arrow{>}}}
                ]
                \draw[knot_diagram, postaction={decorate}] (0, -1) -- (0, 1);
            \end{scope}
        \end{tikzpicture}
        $\xrightarrow{R_1^{+r}}$
        \begin{tikzpicture}[scale=0.85, baseline={([yshift=-1.0ex]current bounding box.center)}]
            \draw[knot_diagram] (10.755, 28.689).. controls (10.754, 28.693) and (10.753, 28.698) .. (10.751, 28.703).. controls (10.691, 28.928) and (10.663, 29.162) .. (10.669, 29.396);
            \draw[knot_diagram] (11.591, 28.49).. controls (11.581, 28.397) and (11.533, 28.309) .. (11.46, 28.251).. controls (11.387, 28.193) and (11.29, 28.166) .. (11.198, 28.177).. controls (11.137, 28.184) and (11.079, 28.207) .. (11.028, 28.241).. controls (10.977, 28.275) and (10.933, 28.319) .. (10.897, 28.368).. controls (10.884, 28.386) and (10.871, 28.405) .. (10.86, 28.424);
            \draw[knot_diagram, ->] (10.713, 28.101).. controls (10.728, 28.279) and (10.751, 28.463) .. (10.839, 28.618).. controls (10.883, 28.696) and (10.943, 28.766) .. (11.019, 28.814).. controls (11.094, 28.863) and (11.185, 28.889) .. (11.274, 28.88).. controls (11.367, 28.871) and (11.455, 28.823) .. (11.514, 28.751).. controls (11.573, 28.679) and (11.601, 28.582) .. (11.591, 28.49);
            \draw[knot_diagram, ->] (10.688, 27.559).. controls (10.69, 27.74) and (10.698, 27.921) .. (10.713, 28.101);
        \end{tikzpicture}
        \hfill
        \begin{tikzpicture}[scale=0.85, baseline={([yshift=-1.0ex]current bounding box.center)}]
            \draw[knot_diagram] (13.989, 28.635).. controls (13.99, 28.64) and (13.992, 28.644) .. (13.993, 28.649).. controls (14.053, 28.874) and (14.081, 29.109) .. (14.075, 29.342);
            \draw[knot_diagram] (13.153, 28.436).. controls (13.163, 28.343) and (13.211, 28.255) .. (13.284, 28.197).. controls (13.357, 28.139) and (13.454, 28.112) .. (13.546, 28.123).. controls (13.607, 28.13) and (13.665, 28.154) .. (13.716, 28.187).. controls (13.767, 28.221) and (13.811, 28.265) .. (13.847, 28.314).. controls (13.86, 28.332) and (13.873, 28.351) .. (13.884, 28.37);
            \draw[knot_diagram, ->] (14.031, 28.047).. controls (14.016, 28.225) and (13.993, 28.409) .. (13.905, 28.565).. controls (13.861, 28.643) and (13.801, 28.712) .. (13.725, 28.761).. controls (13.65, 28.809) and (13.559, 28.835) .. (13.47, 28.826).. controls (13.377, 28.817) and (13.289, 28.769) .. (13.23, 28.697).. controls (13.171, 28.625) and (13.143, 28.529) .. (13.153, 28.436);
            \draw[knot_diagram, ->] (14.056, 27.506).. controls (14.054, 27.686) and (14.046, 27.867) .. (14.031, 28.047);
        \end{tikzpicture}
        $\xleftarrow{R_1^{-l}}$
        \begin{tikzpicture}[scale=0.75, baseline={([yshift=-1.0ex]current bounding box.center)}]
            \begin{scope}[decoration={
                markings,
                mark=at position 0.75 with {\arrow{>}}}
                ]
                \draw[knot_diagram, postaction={decorate}] (0, -1) -- (0, 1);
            \end{scope}
        \end{tikzpicture}
        $\xrightarrow{R_1^{-r}}$
        \begin{tikzpicture}[scale=0.85, baseline={([yshift=-1.0ex]current bounding box.center)}]
            \draw[knot_diagram] (10.188, 28.471).. controls (10.178, 28.378) and (10.129, 28.29) .. (10.056, 28.232).. controls (9.984, 28.174) and (9.887, 28.147) .. (9.794, 28.158).. controls (9.734, 28.165) and (9.675, 28.188) .. (9.624, 28.222).. controls (9.573, 28.256) and (9.53, 28.3) .. (9.493, 28.349).. controls (9.421, 28.448) and (9.379, 28.565) .. (9.348, 28.684).. controls (9.288, 28.909) and (9.26, 29.143) .. (9.266, 29.377);
            \draw[knot_diagram, ->] (9.461, 28.641).. controls (9.502, 28.702) and (9.553, 28.756) .. (9.615, 28.795).. controls (9.69, 28.844) and (9.781, 28.87) .. (9.87, 28.861).. controls (9.963, 28.852) and (10.052, 28.804) .. (10.11, 28.732).. controls (10.169, 28.66) and (10.198, 28.563) .. (10.188, 28.471);
            \draw[knot_diagram] (9.31, 28.082).. controls (9.318, 28.18) and (9.328, 28.279) .. (9.351, 28.374);
            \draw[knot_diagram, ->] (9.285, 27.54).. controls (9.287, 27.721) and (9.295, 27.902) .. (9.31, 28.082);
        \end{tikzpicture}
        \hfill
        \begin{tikzpicture}[scale=0.85, baseline={([yshift=-1.0ex]current bounding box.center)}]
            \draw[knot_diagram] (15.719, 28.908).. controls (15.797, 29.099) and (15.84, 29.304) .. (15.846, 29.51);
            \draw[knot_diagram, ->](15.225, 28.189).. controls (15.311, 28.281) and (15.394, 28.374) .. (15.469, 28.475).. controls (15.569, 28.608) and (15.656, 28.753) .. (15.719, 28.908);
            \draw[knot_diagram, ->] (14.589, 27.474).. controls (14.671, 27.586) and (14.759, 27.694) .. (14.851, 27.797).. controls (14.972, 27.931) and (15.102, 28.058) .. (15.225, 28.189);
            \draw[knot_diagram] (15.147, 29.275).. controls (15.188, 29.351) and (15.224, 29.429) .. (15.254, 29.51);
            \draw[knot_diagram, ->] (14.76, 28.181).. controls (14.721, 28.23) and (14.686, 28.284) .. (14.663, 28.343).. controls (14.64, 28.401) and (14.629, 28.466) .. (14.636, 28.528).. controls (14.646, 28.609) and (14.686, 28.683) .. (14.735, 28.747).. controls (14.785, 28.812) and (14.844, 28.868) .. (14.899, 28.927).. controls (14.996, 29.032) and (15.079, 29.15) .. (15.147, 29.275);
            \draw[knot_diagram, ->] (14.925, 28.003).. controls (14.869, 28.062) and (14.811, 28.118) .. (14.76, 28.181);
            \draw[knot_diagram] (15.216, 27.454).. controls (15.192, 27.603) and (15.132, 27.745) .. (15.043, 27.867);
            \draw[knot_diagram] (14.896, 29.099).. controls (14.779, 29.212) and (14.668, 29.364) .. (14.598, 29.51);
            \draw[knot_diagram] (15.209, 28.832).. controls (15.161, 28.878) and (15.11, 28.92) .. (15.059, 28.962);
            \draw[knot_diagram, ->] (15.417, 28.591).. controls (15.352, 28.683) and (15.291, 28.755) .. (15.209, 28.832);
            \draw[knot_diagram] (15.719, 28.023).. controls (15.671, 28.154) and (15.613, 28.28) .. (15.543, 28.399);
            \draw[knot_diagram, ->] (15.846, 27.468).. controls (15.826, 27.657) and (15.783, 27.844) .. (15.719, 28.023);
        \end{tikzpicture}
        $\xrightarrow{R_3}$
        \begin{tikzpicture}[scale=0.85, baseline={([yshift=-1.0ex]current bounding box.center)}]
            \draw[knot_diagram] (16.475, 29.213).. controls (16.455, 29.311) and (16.444, 29.41) .. (16.441, 29.51);
            \draw[knot_diagram, ->] (16.74, 28.585).. controls (16.673, 28.687) and (16.614, 28.795) .. (16.568, 28.908).. controls (16.527, 29.007) and (16.496, 29.109) .. (16.475, 29.213);
            \draw[knot_diagram] (17.062, 28.189).. controls (16.997, 28.258) and (16.934, 28.329) .. (16.875, 28.402);
            \draw[knot_diagram, ->] (17.236, 28.008).. controls (17.178, 28.068) and (17.119, 28.128) .. (17.062, 28.189);
            \draw[knot_diagram] (17.698, 27.474).. controls (17.616, 27.586) and (17.528, 27.694) .. (17.435, 27.797).. controls (17.415, 27.82) and (17.394, 27.843) .. (17.373, 27.865);
            \draw[knot_diagram] (17.244, 29.107).. controls (17.157, 29.231) and (17.086, 29.367) .. (17.033, 29.51);
            \draw[knot_diagram] (17.551, 28.747).. controls (17.502, 28.812) and (17.443, 28.868) .. (17.388, 28.927).. controls (17.382, 28.934) and (17.375, 28.941) .. (17.369, 28.948);
            \draw[knot_diagram, ->] (17.527, 28.181).. controls (17.566, 28.23) and (17.601, 28.284) .. (17.624, 28.343).. controls (17.647, 28.401) and (17.658, 28.466) .. (17.651, 28.528).. controls (17.641, 28.609) and (17.601, 28.683) .. (17.551, 28.747);
            \draw[knot_diagram, ->] (17.07, 27.454).. controls (17.098, 27.617) and (17.167, 27.773) .. (17.271, 27.901).. controls (17.35, 28.0) and (17.448, 28.082) .. (17.527, 28.181);
            \draw[knot_diagram] (17.497, 29.213).. controls (17.573, 29.303) and (17.638, 29.403) .. (17.689, 29.51);
            \draw[knot_diagram, ->] (16.577, 28.046).. controls (16.637, 28.209) and (16.716, 28.364) .. (16.812, 28.509).. controls (16.888, 28.626) and (16.976, 28.736) .. (17.078, 28.832).. controls (17.183, 28.932) and (17.303, 29.016) .. (17.407, 29.117).. controls (17.438, 29.148) and (17.468, 29.18) .. (17.497, 29.213);
            \draw[knot_diagram, ->] (16.441, 27.468).. controls (16.461, 27.665) and (16.507, 27.86) .. (16.577, 28.046);
        \end{tikzpicture}
        \caption{\label{Figure:LocalOrientedMovesTwo}Moves $R_1^{+l}, R_1^{+r}, R_1^{-l}, R_1^{-r}$ and $R_3$}
    \end{figure}
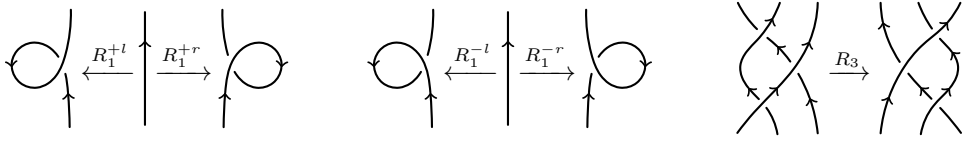

    \begin{figure}[ht]
        \begin{tikzpicture}[scale=0.85, baseline={([yshift=-1.0ex]current bounding box.center)}]
            \draw[knot_diagram] (4.188, 26.112).. controls (4.142, 26.249) and (4.078, 26.381) .. (3.997, 26.5);
            \draw[knot_diagram, <-] (4.2, 25.167).. controls (4.251, 25.337) and (4.274, 25.517) .. (4.265, 25.694).. controls (4.259, 25.836) and (4.232, 25.977) .. (4.188, 26.112);
            \draw[knot_diagram] (4.001, 24.745).. controls (4.088, 24.875) and (4.155, 25.018) .. (4.2, 25.167);
            \draw[knot_diagram] (4.733, 26.108).. controls (4.779, 26.247) and (4.843, 26.38) .. (4.925, 26.5);
            \draw[knot_diagram, ->] (4.719, 25.178).. controls (4.67, 25.345) and (4.648, 25.52) .. (4.657, 25.694).. controls (4.663, 25.835) and (4.69, 25.975) .. (4.733, 26.108);
            \draw[knot_diagram] (4.921, 24.745).. controls (4.832, 24.878) and (4.764, 25.025) .. (4.719, 25.178);
        \end{tikzpicture}
        $\xrightarrow{\Omega_1}$
        \begin{tikzpicture}[scale=0.85, baseline={([yshift=-1.0ex]current bounding box.center)}]
            \draw[knot_diagram] (5.554, 25.178).. controls (5.503, 25.194) and (5.447, 25.191) .. (5.397, 25.171).. controls (5.348, 25.15) and (5.309, 25.11) .. (5.281, 25.064).. controls (5.219, 24.961) and (5.249, 24.809) .. (5.261, 24.757);
            \draw[knot_diagram, ->] (6.365, 25.602).. controls (6.369, 25.424) and (6.412, 25.243) .. (6.372, 25.069).. controls (6.352, 24.982) and (6.31, 24.898) .. (6.241, 24.841).. controls (6.207, 24.812) and (6.167, 24.791) .. (6.123, 24.78).. controls (6.08, 24.769) and (6.034, 24.769) .. (5.991, 24.782).. controls (5.943, 24.797) and (5.901, 24.827) .. (5.866, 24.862).. controls (5.831, 24.898) and (5.802, 24.939) .. (5.773, 24.98).. controls (5.744, 25.021) and (5.714, 25.062) .. (5.679, 25.097).. controls (5.644, 25.133) and (5.602, 25.163) .. (5.554, 25.178);
            \draw[knot_diagram, ->] (5.658, 26.18).. controls (5.692, 26.209) and (5.72, 26.243) .. (5.749, 26.276).. controls (5.777, 26.31) and (5.806, 26.343) .. (5.839, 26.373).. controls (5.871, 26.402) and (5.908, 26.428) .. (5.95, 26.442).. controls (5.994, 26.456) and (6.042, 26.458) .. (6.087, 26.447).. controls (6.132, 26.437) and (6.174, 26.416) .. (6.211, 26.387).. controls (6.283, 26.33) and (6.329, 26.244) .. (6.353, 26.155).. controls (6.4, 25.976) and (6.361, 25.787) .. (6.365, 25.602);
            \draw[knot_diagram, ->] (5.258, 26.504).. controls (5.247, 26.452) and (5.232, 26.317) .. (5.278, 26.231).. controls (5.303, 26.184) and (5.343, 26.146) .. (5.391, 26.125).. controls (5.439, 26.105) and (5.494, 26.101) .. (5.545, 26.115).. controls (5.587, 26.127) and (5.625, 26.152) .. (5.658, 26.18);
            \draw[knot_diagram] (5.734, 26.382).. controls (5.698, 26.423) and (5.659, 26.463) .. (5.618, 26.499);
            \draw[knot_diagram] (5.947, 26.01).. controls (5.921, 26.086) and (5.886, 26.159) .. (5.845, 26.227);
            \draw[knot_diagram, ->] (5.835, 25.033).. controls (5.954, 25.224) and (6.018, 25.45) .. (6.01, 25.675).. controls (6.007, 25.789) and (5.985, 25.902) .. (5.947, 26.01);
            \draw[knot_diagram] (5.589, 24.754).. controls (5.638, 24.794) and (5.684, 24.838) .. (5.726, 24.886);
        \end{tikzpicture}
        \hfill
        \begin{tikzpicture}[scale=0.85, baseline={([yshift=-1.0ex]current bounding box.center)}]
            \draw[knot_diagram] (4.188, 26.112).. controls (4.142, 26.249) and (4.078, 26.381) .. (3.997, 26.5);
            \draw[knot_diagram, ->] (4.2, 25.167).. controls (4.251, 25.337) and (4.274, 25.517) .. (4.265, 25.694).. controls (4.259, 25.836) and (4.232, 25.977) .. (4.188, 26.112);
            \draw[knot_diagram] (4.001, 24.745).. controls (4.088, 24.875) and (4.155, 25.018) .. (4.2, 25.167);
            \draw[knot_diagram] (4.733, 26.108).. controls (4.779, 26.247) and (4.843, 26.38) .. (4.925, 26.5);
            \draw[knot_diagram, <-] (4.719, 25.178).. controls (4.67, 25.345) and (4.648, 25.52) .. (4.657, 25.694).. controls (4.663, 25.835) and (4.69, 25.975) .. (4.733, 26.108);
            \draw[knot_diagram] (4.921, 24.745).. controls (4.832, 24.878) and (4.764, 25.025) .. (4.719, 25.178);
        \end{tikzpicture}
        $\xrightarrow{\Omega_2}$
        \begin{tikzpicture}[scale=0.85, baseline={([yshift=-1.0ex]current bounding box.center)}]
            \draw[knot_diagram] (8.129, 25.243).. controls (8.202, 25.241) and (8.277, 25.21) .. (8.321, 25.152).. controls (8.425, 25.011) and (8.403, 24.898) .. (8.381, 24.761);
            \draw[knot_diagram, ->] (7.315, 25.666).. controls (7.313, 25.565) and (7.293, 25.465) .. (7.278, 25.364).. controls (7.264, 25.264) and (7.254, 25.162) .. (7.271, 25.062).. controls (7.289, 24.963) and (7.337, 24.865) .. (7.42, 24.806).. controls (7.461, 24.776) and (7.51, 24.757) .. (7.56, 24.754).. controls (7.611, 24.75) and (7.663, 24.763) .. (7.704, 24.792).. controls (7.735, 24.812) and (7.759, 24.841) .. (7.779, 24.872).. controls (7.798, 24.903) and (7.813, 24.936) .. (7.828, 24.97).. controls (7.856, 25.038) and (7.882, 25.109) .. (7.932, 25.162).. controls (7.983, 25.215) and (8.056, 25.245) .. (8.129, 25.243);
            \draw[knot_diagram, ->] (8.091, 26.121).. controls (8.031, 26.123) and (7.973, 26.147) .. (7.929, 26.188).. controls (7.886, 26.229) and (7.858, 26.284) .. (7.83, 26.337).. controls (7.802, 26.389) and (7.77, 26.443) .. (7.72, 26.476).. controls (7.681, 26.503) and (7.633, 26.515) .. (7.585, 26.513).. controls (7.538, 26.51) and (7.492, 26.494) .. (7.452, 26.467).. controls (7.373, 26.414) and (7.324, 26.325) .. (7.303, 26.233).. controls (7.282, 26.14) and (7.285, 26.044) .. (7.294, 25.949).. controls (7.303, 25.855) and (7.317, 25.761) .. (7.315, 25.666);
            \draw[knot_diagram, ->] (8.342, 26.505).. controls (8.344, 26.417) and (8.384, 26.274) .. (8.256, 26.178).. controls (8.208, 26.143) and (8.15, 26.12) .. (8.091, 26.121);
            \draw[knot_diagram] (7.906, 26.376).. controls (7.953, 26.422) and (8.004, 26.463) .. (8.059, 26.499);
            \draw[knot_diagram] (7.647, 25.94).. controls (7.666, 26.007) and (7.693, 26.073) .. (7.726, 26.135).. controls (7.742, 26.166) and (7.761, 26.196) .. (7.78, 26.225);
            \draw[knot_diagram, ->] (7.783, 25.095).. controls (7.769, 25.117) and (7.755, 25.139) .. (7.743, 25.162).. controls (7.66, 25.313) and (7.612, 25.484) .. (7.609, 25.656).. controls (7.608, 25.752) and (7.621, 25.848) .. (7.647, 25.94);
            \draw[knot_diagram] (8.116, 24.751).. controls (8.104, 24.759) and (8.093, 24.767) .. (8.081, 24.776).. controls (8.015, 24.826) and (7.953, 24.882) .. (7.898, 24.944);
        \end{tikzpicture}
        \hfill
        \begin{tikzpicture}[scale=0.85, baseline={([yshift=-1.0ex]current bounding box.center)}]
            \draw[knot_diagram] (10.738, 25.821).. controls (10.71, 25.748) and (10.66, 25.685) .. (10.602, 25.632).. controls (10.544, 25.579) and (10.478, 25.536) .. (10.412, 25.493).. controls (10.346, 25.451) and (10.28, 25.408) .. (10.222, 25.356).. controls (10.163, 25.303) and (10.112, 25.241) .. (10.083, 25.169).. controls (10.055, 25.101) and (10.048, 25.026) .. (10.061, 24.955).. controls (10.075, 24.883) and (10.109, 24.816) .. (10.159, 24.763);
            \draw[knot_diagram, ->] (9.98, 26.205).. controls (9.955, 26.236) and (9.931, 26.27) .. (9.921, 26.309).. controls (9.913, 26.337) and (9.913, 26.367) .. (9.92, 26.396).. controls (9.927, 26.424) and (9.941, 26.451) .. (9.959, 26.474).. controls (9.996, 26.52) and (10.051, 26.549) .. (10.109, 26.56).. controls (10.167, 26.57) and (10.227, 26.563) .. (10.283, 26.545).. controls (10.339, 26.527) and (10.391, 26.499) .. (10.44, 26.467).. controls (10.548, 26.397) and (10.645, 26.306) .. (10.705, 26.193).. controls (10.735, 26.136) and (10.755, 26.074) .. (10.762, 26.01).. controls (10.768, 25.946) and (10.761, 25.881) .. (10.738, 25.821);
            \draw[knot_diagram] (10.079, 25.954).. controls (10.083, 25.984) and (10.082, 26.015) .. (10.075, 26.044);
            \draw[knot_diagram, ->] (9.205, 26.511).. controls (9.191, 26.362) and (9.213, 26.21) .. (9.269, 26.071).. controls (9.309, 25.972) and (9.366, 25.879) .. (9.446, 25.808).. controls (9.525, 25.737) and (9.628, 25.689) .. (9.735, 25.685).. controls (9.788, 25.683) and (9.842, 25.692) .. (9.891, 25.713).. controls (9.94, 25.734) and (9.984, 25.767) .. (10.018, 25.809).. controls (10.051, 25.85) and (10.073, 25.901) .. (10.079, 25.954);
            \draw[knot_diagram, <-] (10.26, 26.233).. controls (10.214, 26.237) and (10.167, 26.223) .. (10.127, 26.2).. controls (10.087, 26.178) and (10.051, 26.148) .. (10.017, 26.116).. controls (9.983, 26.085) and (9.949, 26.053) .. (9.911, 26.026).. controls (9.874, 26.0) and (9.831, 25.979) .. (9.785, 25.973).. controls (9.746, 25.968) and (9.705, 25.974) .. (9.669, 25.99).. controls (9.633, 26.006) and (9.601, 26.031) .. (9.576, 26.061).. controls (9.526, 26.122) and (9.506, 26.205) .. (9.513, 26.283).. controls (9.52, 26.364) and (9.555, 26.443) .. (9.611, 26.502);
            \draw[knot_diagram, <-] (9.973, 25.522).. controls (10.088, 25.584) and (10.208, 25.639) .. (10.304, 25.726).. controls (10.344, 25.762) and (10.379, 25.804) .. (10.405, 25.851).. controls (10.431, 25.898) and (10.446, 25.951) .. (10.446, 26.005).. controls (10.446, 26.059) and (10.43, 26.113) .. (10.397, 26.156).. controls (10.364, 26.198) and (10.313, 26.228) .. (10.26, 26.233);
            \draw[knot_diagram] (9.725, 24.763).. controls (9.663, 24.831) and (9.624, 24.92) .. (9.615, 25.012).. controls (9.607, 25.104) and (9.629, 25.198) .. (9.677, 25.276).. controls (9.745, 25.387) and (9.859, 25.461) .. (9.973, 25.522);
        \end{tikzpicture}
        $\xrightarrow{T_1}$
        \begin{tikzpicture}[scale=0.85, baseline={([yshift=-1.0ex]current bounding box.center)}]
            \draw[knot_diagram] (11.181, 25.989).. controls (11.172, 25.929) and (11.175, 25.866) .. (11.194, 25.808).. controls (11.216, 25.737) and (11.261, 25.674) .. (11.313, 25.62).. controls (11.366, 25.567) and (11.427, 25.522) .. (11.488, 25.479).. controls (11.549, 25.435) and (11.61, 25.392) .. (11.665, 25.34).. controls (11.72, 25.289) and (11.768, 25.229) .. (11.795, 25.159).. controls (11.822, 25.092) and (11.83, 25.017) .. (11.816, 24.945).. controls (11.803, 24.874) and (11.769, 24.807) .. (11.719, 24.754);
            \draw[knot_diagram, ->] (11.775, 26.029).. controls (11.792, 26.088) and (11.832, 26.141) .. (11.884, 26.173).. controls (11.923, 26.196) and (11.97, 26.209) .. (12.004, 26.239).. controls (12.032, 26.263) and (12.05, 26.296) .. (12.057, 26.331).. controls (12.063, 26.367) and (12.058, 26.404) .. (12.044, 26.437).. controls (12.029, 26.47) and (12.006, 26.499) .. (11.977, 26.521).. controls (11.949, 26.543) and (11.915, 26.558) .. (11.88, 26.566).. controls (11.81, 26.581) and (11.735, 26.569) .. (11.668, 26.542).. controls (11.602, 26.515) and (11.542, 26.474) .. (11.485, 26.429).. controls (11.389, 26.354) and (11.3, 26.267) .. (11.241, 26.161).. controls (11.212, 26.107) and (11.19, 26.049) .. (11.181, 25.989);
            \draw[knot_diagram, ->] (12.673, 26.502).. controls (12.684, 26.353) and (12.662, 26.201) .. (12.609, 26.062).. controls (12.566, 25.947) and (12.5, 25.838) .. (12.407, 25.759).. controls (12.313, 25.68) and (12.189, 25.632) .. (12.067, 25.644).. controls (12.006, 25.65) and (11.946, 25.671) .. (11.896, 25.707).. controls (11.846, 25.742) and (11.806, 25.792) .. (11.783, 25.849).. controls (11.761, 25.906) and (11.757, 25.971) .. (11.775, 26.029);
            \draw[knot_diagram, <-] (12.093, 25.964).. controls (12.132, 25.958) and (12.173, 25.965) .. (12.209, 25.981).. controls (12.245, 25.996) and (12.277, 26.021) .. (12.302, 26.052).. controls (12.352, 26.113) and (12.372, 26.195) .. (12.365, 26.274).. controls (12.358, 26.355) and (12.323, 26.433) .. (12.267, 26.492);
            \draw[knot_diagram] (11.913, 26.06).. controls (11.93, 26.045) and (11.948, 26.03) .. (11.967, 26.017).. controls (12.004, 25.99) and (12.047, 25.969) .. (12.093, 25.964);
            \draw[knot_diagram, <-] (11.905, 25.513).. controls (11.79, 25.575) and (11.67, 25.629) .. (11.574, 25.717).. controls (11.534, 25.753) and (11.499, 25.794) .. (11.473, 25.841).. controls (11.447, 25.889) and (11.432, 25.942) .. (11.432, 25.996).. controls (11.432, 26.05) and (11.448, 26.104) .. (11.481, 26.146).. controls (11.514, 26.189) and (11.565, 26.218) .. (11.618, 26.223).. controls (11.664, 26.227) and (11.71, 26.213) .. (11.751, 26.191);
            \draw[knot_diagram] (12.153, 24.754).. controls (12.215, 24.822) and (12.254, 24.911) .. (12.262, 25.002).. controls (12.271, 25.094) and (12.249, 25.188) .. (12.201, 25.267).. controls (12.133, 25.378) and (12.019, 25.451) .. (11.905, 25.513);
        \end{tikzpicture}
        \hfill
        \begin{tikzpicture}[scale=0.85, baseline={([yshift=-1.0ex]current bounding box.center)}]
            \draw[knot_diagram] (10.738, 25.821).. controls (10.71, 25.748) and (10.66, 25.685) .. (10.602, 25.632).. controls (10.544, 25.579) and (10.478, 25.536) .. (10.412, 25.493).. controls (10.346, 25.451) and (10.28, 25.408) .. (10.222, 25.356).. controls (10.163, 25.303) and (10.112, 25.241) .. (10.083, 25.169).. controls (10.055, 25.101) and (10.048, 25.026) .. (10.061, 24.955).. controls (10.075, 24.883) and (10.109, 24.816) .. (10.159, 24.763);
            \draw[knot_diagram, ->] (9.921, 26.309).. controls (9.913, 26.337) and (9.913, 26.367) .. (9.92, 26.396).. controls (9.927, 26.424) and (9.941, 26.451) .. (9.959, 26.474).. controls (9.996, 26.52) and (10.051, 26.549) .. (10.109, 26.56).. controls (10.167, 26.57) and (10.227, 26.563) .. (10.283, 26.545).. controls (10.339, 26.527) and (10.391, 26.499) .. (10.44, 26.467).. controls (10.548, 26.397) and (10.645, 26.306) .. (10.705, 26.193).. controls (10.735, 26.136) and (10.755, 26.074) .. (10.762, 26.01).. controls (10.768, 25.946) and (10.761, 25.881) .. (10.738, 25.821);
            \draw[knot_diagram, ->] (9.205, 26.511).. controls (9.191, 26.362) and (9.213, 26.21) .. (9.269, 26.071).. controls (9.309, 25.972) and (9.366, 25.879) .. (9.446, 25.808).. controls (9.525, 25.737) and (9.628, 25.689) .. (9.735, 25.685).. controls (9.788, 25.683) and (9.842, 25.692) .. (9.891, 25.713).. controls (9.94, 25.734) and (9.984, 25.767) .. (10.018, 25.809).. controls (10.051, 25.85) and (10.073, 25.901) .. (10.079, 25.954).. controls (10.083, 25.984) and (10.082, 26.015) .. (10.075, 26.044).. controls (10.07, 26.067) and (10.062, 26.089) .. (10.051, 26.109).. controls (10.032, 26.144) and (10.005, 26.174) .. (9.98, 26.205).. controls (9.955, 26.236) and (9.931, 26.27) .. (9.921, 26.309);
            \draw[knot_diagram] (9.964, 26.068).. controls (9.947, 26.054) and (9.93, 26.039) .. (9.911, 26.026).. controls (9.874, 26.0) and (9.831, 25.979) .. (9.785, 25.973).. controls (9.746, 25.968) and (9.705, 25.974) .. (9.669, 25.99).. controls (9.633, 26.006) and (9.601, 26.031) .. (9.576, 26.061).. controls (9.526, 26.122) and (9.506, 26.205) .. (9.513, 26.283).. controls (9.52, 26.364) and (9.555, 26.443) .. (9.611, 26.502);
            \draw[knot_diagram, <-] (10.26, 26.233).. controls (10.214, 26.237) and (10.167, 26.223) .. (10.127, 26.2);
            \draw[knot_diagram, <-] (9.677, 25.276).. controls (9.745, 25.387) and (9.859, 25.461) .. (9.973, 25.522).. controls (10.088, 25.584) and (10.208, 25.639) .. (10.304, 25.726).. controls (10.344, 25.762) and (10.379, 25.804) .. (10.405, 25.851).. controls (10.431, 25.898) and (10.446, 25.951) .. (10.446, 26.005).. controls (10.446, 26.059) and (10.43, 26.113) .. (10.397, 26.156).. controls (10.364, 26.198) and (10.313, 26.228) .. (10.26, 26.233);
            \draw[knot_diagram] (9.725, 24.763).. controls (9.663, 24.831) and (9.624, 24.92) .. (9.615, 25.012).. controls (9.607, 25.104) and (9.629, 25.198) .. (9.677, 25.276);
        \end{tikzpicture}
        $\xrightarrow{T_2}$
        \begin{tikzpicture}[scale=0.85, baseline={([yshift=-1.0ex]current bounding box.center)}]
            \draw[knot_diagram] (11.181, 25.989).. controls (11.172, 25.929) and (11.175, 25.866) .. (11.194, 25.808).. controls (11.216, 25.737) and (11.261, 25.674) .. (11.313, 25.62).. controls (11.366, 25.567) and (11.427, 25.522) .. (11.488, 25.479).. controls (11.549, 25.435) and (11.61, 25.392) .. (11.665, 25.34).. controls (11.72, 25.289) and (11.768, 25.229) .. (11.795, 25.159).. controls (11.822, 25.092) and (11.83, 25.017) .. (11.816, 24.945).. controls (11.803, 24.874) and (11.769, 24.807) .. (11.719, 24.754);
            \draw[knot_diagram, ->] (12.057, 26.331).. controls (12.063, 26.367) and (12.058, 26.404) .. (12.044, 26.437).. controls (12.029, 26.47) and (12.006, 26.499) .. (11.977, 26.521).. controls (11.949, 26.543) and (11.915, 26.558) .. (11.88, 26.566).. controls (11.81, 26.581) and (11.735, 26.569) .. (11.668, 26.542).. controls (11.602, 26.515) and (11.542, 26.474) .. (11.485, 26.429).. controls (11.389, 26.354) and (11.3, 26.267) .. (11.241, 26.161).. controls (11.212, 26.107) and (11.19, 26.049) .. (11.181, 25.989);
            \draw[knot_diagram, ->] (11.904, 26.183).. controls (11.938, 26.201) and (11.975, 26.214) .. (12.004, 26.239).. controls (12.032, 26.263) and (12.05, 26.296) .. (12.057, 26.331);
            \draw[knot_diagram] (12.673, 26.502).. controls (12.684, 26.353) and (12.662, 26.201) .. (12.609, 26.062).. controls (12.566, 25.947) and (12.5, 25.838) .. (12.407, 25.759).. controls (12.313, 25.68) and (12.189, 25.632) .. (12.067, 25.644).. controls (12.006, 25.65) and (11.946, 25.671) .. (11.896, 25.707).. controls (11.846, 25.742) and (11.806, 25.792) .. (11.783, 25.849).. controls (11.761, 25.906) and (11.757, 25.971) .. (11.775, 26.029).. controls (11.778, 26.04) and (11.782, 26.051) .. (11.787, 26.062);
            \draw[knot_diagram, <-] (11.618, 26.223).. controls (11.664, 26.227) and (11.71, 26.213) .. (11.751, 26.191).. controls (11.791, 26.169) and (11.827, 26.138) .. (11.861, 26.107).. controls (11.895, 26.076) and (11.929, 26.044) .. (11.967, 26.017).. controls (12.004, 25.99) and (12.047, 25.969) .. (12.093, 25.964).. controls (12.132, 25.958) and (12.173, 25.965) .. (12.209, 25.981).. controls (12.245, 25.996) and (12.277, 26.021) .. (12.302, 26.052).. controls (12.352, 26.113) and (12.372, 26.195) .. (12.365, 26.274).. controls (12.358, 26.355) and (12.323, 26.433) .. (12.267, 26.492);
            \draw[knot_diagram, <-] (11.905, 25.513).. controls (11.79, 25.575) and (11.67, 25.629) .. (11.574, 25.717).. controls (11.534, 25.753) and (11.499, 25.794) .. (11.473, 25.841).. controls (11.447, 25.889) and (11.432, 25.942) .. (11.432, 25.996).. controls (11.432, 26.05) and (11.448, 26.104) .. (11.481, 26.146).. controls (11.514, 26.189) and (11.565, 26.218) .. (11.618, 26.223);
            \draw[knot_diagram] (12.153, 24.754).. controls (12.215, 24.822) and (12.254, 24.911) .. (12.262, 25.002).. controls (12.271, 25.094) and (12.249, 25.188) .. (12.201, 25.267).. controls (12.133, 25.378) and (12.019, 25.451) .. (11.905, 25.513);
        \end{tikzpicture}
        \caption{\label{Figure:LocalOrientedMovesThree}Moves $\Omega_1, \Omega_2, T_1$ and $T_2$}
    \end{figure}
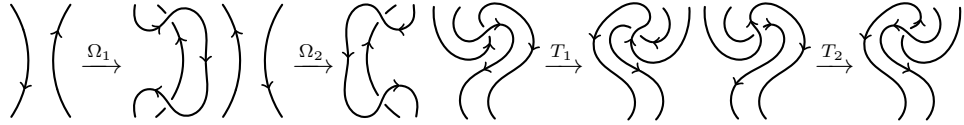

    Consider different cases where the diagram $D'$ is obtained from the diagram $D$ by one of these moves.

    \emph{$W_1^r, W_1^l, W_2^r, W_2^l, R_2^l, R_2^r$ moves}. In these cases the tensor network $T_{D', \xi'}$ differs from $T_{D, \xi}$ by inserting the inverse tensor pair ($R_{g_1, g_2}$ and $\overline{R_{g_1, g_2}}$ for the moves $W_1^r$ and $W_2^r$, $L_{g_1, g_2}$ and $\overline{L_{g_1, g_2}}$ for the moves $W_1^l$ and $W_2^l$, $Y_g$ and $\overline{Y_g}$ for the moves $R_2^l$ and $R_2^r$). So $T_{D, \xi} = T_{D', \xi'}$.

    In all these cases $r(D) = r(D')$ and $w(D) = w(D')$. Then $t_{\mathcal{S}}(D, \xi) = t_{\mathcal{S}}(D', \xi')$.

    \emph{$R_1^{+r}, R_1^{-r}, R_1^{+l}, R_1^{-l}$ moves}. In these cases the tensor network $T_{D', \xi'}$ differs from the network $T_{D, \xi}$ by inserting one of the diagrams shown in the figure \ref{Figure:Axioms1}. 
    
    Consider the case of the move $R_1^{+r}$, all others are similar. It follows from axiom 1a that $T_{D', \xi'} = AB^{-1}\cdot T_{D, \xi}$. But $r(D') = r(D) - 1$ and $w(D') = w(D) + 1$. Hence 
    \begin{multline*}
        t_{\mathcal{S}}(D', \xi') = A^{r(D')}B^{w(D')}\cdot T_{D', \xi'} = A^{r(D) - 1} B^{w(D) + 1} \cdot AB^{-1} T_{D, \xi} = \\ = A^{r(D)}B^{w(D)}\cdot T_{D, \xi} = t_{\mathcal{S}}(D, \xi).
    \end{multline*}

    \emph{$R_3$ move}. In this case the tensor network $T_{D', \xi'}$ is obtained from $T_{D, \xi}$ by replacing the fragment shown in the figure \ref{Figure:Axioms2} on the left by the fragment shown in the same figure \ref{Figure:Axioms2} on the right. By axiom 2: $T_{D', \xi'} = T_{D, \xi}$. It's clear that $r(D) = r(D')$ and $w(D) = w(D')$. So $t_{\mathcal{S}}(D, \xi) = t_{\mathcal{S}}(D', \xi')$.

    \emph{$\Omega_1$ and $\Omega_2$ moves}. Consider the case of the move $\Omega_1$, the other is similar. In this case the tensor network $T_{D', \xi'}$ is obtained from $T_{D, \xi}$ by replacing two parallel arcs by the fragment shown in the figure \ref{Figure:Axioms3} on the left. By axiom 3a: $T_{D, \xi} = T_{D', \xi'}$. As $r(D) = r(D')$ and $w(D) = w(D')$, then $t_{\mathcal{S}}(D, \xi) = t_{\mathcal{S}}(D', \xi')$.

    \emph{$T_1$ and $T_2$ moves}. Consider the case of the move $T_1$, the other is similar. In this case the tensor network $T_{D', \xi'}$ is obtained from the network $T_{D, \xi}$ by replacing the diagram shown in the figure \ref{Figure:Axioms4} on the left of the top row by the diagram shown in the same figure \ref{Figure:Axioms4} on the right of the top row. By axiom 4a: $T_{D, \xi} = T_{D', \xi'}$. As $r(D) = r(D')$ and $w(D) = w(D')$, then $t_{\mathcal{S}}(D, \xi) = t_{\mathcal{S}}(D', \xi')$.
\end{proof}

If we have the family $\{\mathcal{S}_{\alpha, \beta} | \alpha, \beta\in G\}$ of consistent tensor $G$ systems, defined for all pairs $\alpha, \beta\in G$, then it can be used to define the invariant $\tau(K)$ as follows. For each pair of values $\alpha, \beta\in G$ we can find the multi-set of values $t_{\alpha, \beta}(K) = \{t_{\mathcal{S}_{\alpha, \beta}}(D, \xi) | \xi\in Col_{\alpha, \beta}(D)\}$, where $D$ is any diagram of the link $K$. It's clear that this set $t_{\alpha, \beta}(K)$ does not depend on the diagram $D$ of the link $K$. As usual in knot theory, this multi-set can be written as an element of the integer group ring $\mathbb{Z}K$. Denote this element by the same $t_{\alpha, \beta}(K)$. Finally, we gather all these values into the formal sum $$\tau(K) = \sum\limits_{\alpha, \beta\in G}t_{\alpha, \beta}(K)\cdot (\alpha, \beta)\in (\mathbb{Z}K)(G\times G).$$

\section{Further development}

There are several directions that seem quite natural for further development.

\subsection{Study of the electric group}

The electric group has two additional generators compared to the reduced electric group. The definition of the electric group looks more general. The task is to understand whether these two additional generators are important or not. Equivalently we can ask: is it true that if $\mathcal{E}_r(K_1) = \mathcal{E}_r(K_2)$, then $\mathcal{E}(K_1) = \mathcal{E}(K_2)$?

Classical knot group is a powerful invariant of knots and links. The electric group looks simpler and weaker. The task is to understand the power of the electric group for classifying knots and links. In particular, find two different links with isomorphic electric groups.

It's known that the fundamental quandle of the link is a complete invariant (\cite{MQ, J}). Potentially any other invariant can be extracted from the fundamental quandle. The task is to find the relation between the fundamental quandle (or homomorphisms from the fundamental quandle to another finite quandle) and the electric group.

The definition of the electric group is similar to the original definition of the Alexander polynomial for knots (\cite{A}), and in particular to the definition of the Dehn presentation of the knot group. The task is to find the exact connection between the electric group and the Alexander polynomial.

The general task in studying the electric group is to understand the meaning of the electric group of links. What properties of the link does this group reflect?

\subsection{Building consistent tensor $G$-systems}

The definition of the consistent tensor $G$-system is quite abstract and contains many axioms sufficient for the invariance of $t_{\mathcal{S}}(D, \xi)$. If the group $G$ is trivial, then this system is mostly the same as the extended Yang -- Baxter operator (see \cite{TYB}). The task is to construct this system for non-trivial groups $G$.

As an easier task, we can consider a module $V$ of rank 1. In this case all tensors $Y_g, L_{g_1, g_2}, R_{g_1, g}$ and their inverse are just numbers, and tensor contraction is just multiplication of these numbers. So the additive structure of the ring $K$ is not important. Instead of $K$ with multiplication, we can consider an abelian group and write tensor operations additively. Then all axioms for a consistent tensor $G$-system become a system of linear equations. The task is to find a solution of these equations for some groups.

\bigskip

\end{document}